\DeclareMathAlphabet{\mathbbm}{U}{bbm}{m}{n}
\DeclareFontFamily{U}{BOONDOX-calo}{\skewchar\font=45 }
\DeclareFontShape{U}{BOONDOX-calo}{m}{n}{
  <-> s*[1.05] BOONDOX-r-calo}{}
\DeclareFontShape{U}{BOONDOX-calo}{b}{n}{
  <-> s*[1.05] BOONDOX-b-calo}{}
\DeclareMathAlphabet{\mcb}{U}{BOONDOX-calo}{m}{n}
\SetMathAlphabet{\mcb}{bold}{U}{BOONDOX-calo}{b}{n}
    \pgfmathsetlength{\pgf@xb}{\pgfkeysvalueof{/pgf/outer xsep}}%
    \pgfmathsetlength{\pgf@yb}{\pgfkeysvalueof{/pgf/outer ysep}}%
\colorlet{symbols}{black}      
\colorlet{testcolor}{green!60!black}
\def\BPHZ{\textnormal{\tiny \textsc{bphz}}}
\def\symbol#1{\textcolor{symbols}{#1}}
\def\1{\mathbf{\symbol{1}}}
\def\mfL{\mathfrak{L}}
\def\drawx{\draw[-,solid] (-3pt,-3pt) -- (3pt,3pt);\draw[-,solid] (-3pt,3pt) -- (3pt,-3pt);}
\tikzset{
	root/.style={circle,fill=testcolor,inner sep=0pt, minimum size=2mm},
	dot/.style={circle,fill=symbols,draw=symbols,inner sep=0pt,minimum size=0.4pt},
	var/.style={circle,fill=black!10,draw=black,inner sep=0pt, minimum size=2mm},
	dotred/.style={circle,fill=black!50,inner sep=0pt, minimum size=2mm},
	generic/.style={semithick,shorten >=1pt,shorten <=1pt},
	dist/.style={ultra thick,draw=testcolor,shorten >=1pt,shorten <=1pt},
	testfcn/.style={ultra thick,testcolor,shorten >=1pt,shorten <=1pt,<-},
	testfcnx/.style={ultra thick,testcolor,shorten >=1pt,shorten <=1pt,<-,
		postaction={decorate,decoration={markings,mark=at position 0.6 with {\drawx}}}},
	kprime/.style={semithick,shorten >=1pt,shorten <=1pt,densely dashed,->},
	kprimex/.style={semithick,shorten >=1pt,shorten <=1pt,densely dashed,->,
		postaction={decorate,decoration={markings,mark=at position 0.4 with {\drawx}}}},
	kernel/.style={semithick,shorten >=1pt,shorten <=1pt,->},
	multx/.style={shorten >=1pt,shorten <=1pt,
		postaction={decorate,decoration={markings,mark=at position 0.5 with {\drawx}}}},
	kernelx/.style={semithick,shorten >=1pt,shorten <=1pt,->,
		postaction={decorate,decoration={markings,mark=at position 0.4 with {\drawx}}}},
	kernel1/.style={->,semithick,shorten >=1pt,shorten <=1pt,postaction={decorate,decoration={markings,mark=at position 0.45 with {\draw[-] (0,-0.1) -- (0,0.1);}}}},
	kernel2/.style={->,semithick,shorten >=1pt,shorten <=1pt,postaction={decorate,decoration={markings,mark=at position 0.45 with {\draw[-] (0.05,-0.1) -- (0.05,0.1);\draw[-] (-0.05,-0.1) -- (-0.05,0.1);}}}},
	kernelBig/.style={semithick,shorten >=1pt,shorten <=1pt,decorate, decoration={zigzag,amplitude=1.5pt,segment length = 3pt,pre length=2pt,post length=2pt}},
	rho/.style={dotted,semithick,shorten >=1pt,shorten <=1pt},
	renorm/.style={shape=circle,fill=white,inner sep=1pt},
	labl/.style={shape=rectangle,fill=white,inner sep=1pt},
cumu2n/.style={inner sep=3pt},
cumu2/.style={draw=red!50,fill=red!20},
cumu3/.style={regular polygon, regular polygon sides=3,draw=red!50,rounded corners=3pt,fill=red!20,minimum size=5mm},
cumu4/.style={regular polygon, regular polygon sides=4,draw=red!50,rounded corners=3pt,fill=red!20,minimum size=7mm},
cumu5/.style={regular polygon, regular polygon sides=5,draw=red!50,rounded corners=3pt,fill=red!20,minimum size=5mm},
Ip/.style={draw=symbols},
xi/.style={circle,fill=symbols!10,draw=symbols,inner sep=0pt,minimum size=1.5pt},
xix/.style={crosscircle,fill=symbols!10,draw=symbols,inner sep=0pt,minimum size=1.2mm},
xib/.style={circle,fill=symbols!10,draw=symbols,inner sep=0pt,minimum size=1.6mm},
xibx/.style={crosscircle,fill=symbols!10,draw=symbols,inner sep=0pt,minimum size=1.6mm},
not/.style={circle,fill=symbols,draw=symbols,inner sep=0pt,minimum size=0.5mm},
I/.style={draw=symbols,
		decorate, decoration={zigzag,amplitude=0.4pt,segment length = 0.6pt,pre length=0.5pt,post length=0.5pt}},
>=stealth,
}
\def\DeclareSymbol#1#2#3{\expandafter\gdef\csname MH@symb@#1\endcsname{\tikz[baseline=#2,scale=0.15,draw=symbols]{#3}}\expandafter\gdef\csname MH@symb@#1s\endcsname{\scalebox{0.7}{\tikz[baseline=#2,scale=0.15,draw=symbols]{#3}}}}
\def\<#1>{\csname MH@symb@#1\endcsname}
\newcommand{\mft}{\mathfrak{t}}
\newcommand{\mfl}{\mathfrak{l}}
\def\s{\mathfrak{s}}
\newcommand{\e}{\varepsilon}
\def\DD{\mathscr{D}}
\def\${|\!|\!|}
\def\Z{\mathbb{Z}}
\def\R{\mathbb{R}}
\def\N{\mathbb{N}}
\let\wtd\widetilde
\def\t{\mathfrak{t}}
\definecolor{Red}{rgb}{1,0,0}
\definecolor{Blue}{rgb}{0,0,1}
\definecolor{Olive}{rgb}{0.41,0.55,0.13}
\definecolor{Yarok}{rgb}{0,0.5,0}
\definecolor{Green}{rgb}{0,1,0}
\definecolor{MGreen}{rgb}{0,0.8,0}
\definecolor{DGreen}{rgb}{0,0.55,0}
\definecolor{Yellow}{rgb}{1,1,0}
\definecolor{Cyan}{rgb}{0,1,1}
\definecolor{Magenta}{rgb}{1,0,1}
\definecolor{Orange}{rgb}{1,.5,0}
\definecolor{Violet}{rgb}{.5,0,.5}
\definecolor{Purple}{rgb}{.75,0,.25}
\definecolor{Brown}{rgb}{.75,.5,.25}
\definecolor{Grey}{rgb}{.7,.7,.7}
\definecolor{Black}{rgb}{0,0,0}
\begin{document}

\title{Local solution to the multi-layer KPZ equation}
\author{Ajay Chandra$^1$ and Dirk Erhard$^2$ and Hao Shen$^3$}
\institute{Imperial College London, UK, \email{achandra@imperial.ac.uk} 
\and Universidade Federal da Bahia, Brazil, \email{erharddirk@gmail.com}
\and University of Wisconsin-Madison, US, \email{pkushenhao@gmail.com}}

\maketitle

\begin{abstract}
In this article we prove local well-posedness of the system of equations 
$\partial_t h_{i}= \sum_{j=1}^{i}\partial_x^2 h_{i}+ (\partial_x h_{i})^2 + \xi $ on the circle
 where
$1\leq i\leq N$ and 
$\xi$ is a space-time white noise. 
We attempt to generalize the renormalization procedure which gives the Hopf-Cole solution for the single layer equation and our $h_1$ (solution to the first layer) coincides with this solution. However, we observe that cancellation of logarithmic divergences that occurs at the first layer does not hold at higher layers and develop explicit combinatorial formulae for them. 
\end{abstract}

\setcounter{tocdepth}{2}
\tableofcontents

\section{Introduction}
\label{S1}

The aim of this paper is to introduce a system of equations called the multi-layer KPZ equation, formally given by
\begin{equ}  [e:MKPZintro]
\partial_t h_i 
=  \sum_{j=1}^{i}  \partial_x^2 h_j + (\partial_x h_i)^2 
	+ \xi   \qquad i\in\{1,2,\cdots, N\} \;,
\end{equ}
where $\xi$ is a space-time white noise.
We
study the local well-posedness and renormalization  of
this system on the circle.
 This system is interesting for  several reasons.

First, there has been increasing interest recently in the study of coupled KPZ systems. In \cite{funaki2016coupled}, the local solution to the following coupled system is studied
\begin{equ}  \label{e:Funaki-KPZ}
\partial_t h^\alpha = \frac12 \partial_x^2 h^\alpha 
	+ \frac12  \sum_{\beta,\gamma=1}^n  
		\Gamma_{\beta\gamma}^\alpha  \partial_x h^\beta \partial_x h^\gamma 
	+ \sum_{\beta=1}^n \sigma^\alpha_\beta \xi^\beta
\qquad \alpha\in\{1,2,\cdots, N\} \;.
\end{equ}
This is an $\R^n$ vector valued KPZ system driven by $n$ independent space-time white noises, with constant coefficients $\Gamma_{\beta\gamma}^\alpha$ and $\sigma^\alpha_\beta$. 
When $n=1$, this then becomes  the KPZ equation.
In \cite{hairer2016motion}, Hairer studied the equation describing the random motion of a string in an $n$ dimensional manifold driven by $m$ independent space-time white noises, written as
\begin{equ}  \label{e:Hairer-String}
\partial_t h^\alpha =  \partial_x^2 h^\alpha 
	+  \sum_{\beta,\gamma=1}^N  
		\Gamma_{\beta\gamma}^\alpha (h)  \partial_x h^\beta \partial_x h^\gamma 
	+ \sum_{i=1}^m \sigma^\alpha_i (h) \xi_i
\qquad \alpha\in\{1,2,\cdots, N\} \;,
\end{equ}
where $\Gamma_{\beta\gamma}^\alpha (h)$ are  the Christoffel symbols for the Levi-Civita connection, and  $\{\sigma_i (h)  \}$ is a collection of vector fields on the manifold.
In these works, the coupling between the equations in the system
arises from the nonlinearities. 
In the present article, we investigate a  new  situation, that is, one has a {\it matrix} in front of the diffusive term. In particular, the matrix is the lower triangle matrix with all the lower triangular entries being $1$. Moreover, the whole system is driven by only {\it one single} white noise $\xi$.
The system is thus such that the first $m$ equations evolve independently of the $m'$-th equation for all $m'>m$.
As a matter of fact, one could also think of the $i$-th equation
as a usual KPZ equation driven by $\xi$ together with  
``noises" $\partial_x^2 h_1,\cdots,\partial_x^2 h_{i-1}$.

Another motivation of studying the present problem, especially the particular choice of the lower triangular diffusive matrix, is the following.
In the paper \cite{NeilJon} the authors introduced a system of (continuum) multi-layer partition functions $Z_N$ for $N \in \N$
in order to better understand the integrability structure behind the discrete or semi-discrete polymer models, and their connection with quantum Toda lattice and geometric RSK correspondence:
\[
Z_N(t,x,y)=p(t,x,y)^N \Big( 1+\sum_{k=1}^\infty \int R^n_k ((t_1,x_1),\cdots,(t_k,x_k))
\xi^{\otimes k} (dt_1,\cdots,dx_k) \Big)
\]
where $R_k^n$ is the k-point correlation for a collection
of $n$ non-intersecting Brownian bridges starting at $x$ at time $0$
and ending at $y$ at time $t$,
and the integrals are stochastic iterated integrals
against the white noise $\xi$
over $(x_1,\cdots,x_k)\in \R^k$
and $0<t_1<\cdots <t_k<t$.
 It was shown in \cite{NeilJon} that this series representation for each $Z_n$ is  $L^2$ convergent. In particular, when $N=1$, $R_k$ is the transition density
of the Brownian motion passing through the points $(t_1,x_1),\cdots,(t_k,x_k)$, and
 the above   series   becomes
the chaos series solution to the stochastic heat equation $Z=\frac12 \partial_y^2 Z + Z\xi$ with Dirac initial data.
 The processes $Z_N$ are related with a system of stochastic PDEs in the following way. Define $u=u_1=Z_1$ and for $n\ge 2$ define $u_n=Z_n/Z_{n-1}$. Then
 pretending that $\xi$ is smooth, \cite{NeilJon} derived via a Karlin-McGregor formula
 and Darboux transformation 
  a coupled system of stochastic (nonlinear) heat equations for  $u_n$:
 \footnote{In  \cite{NeilJon} the processes depend on two spatial parameters $x$ and $y$, but here we set $x$ to be zero and write $x$ for $y$ for notational simplicity.}
\begin{equ} [e:MSHE]
\partial_t u_n = \frac12 \partial_x^2 u_n + \Big(\xi 
	+ \partial_x^2 \log(Z_{n-1}/p^{n-1})\Big)\,u_n
\end{equ}
with initial conditions $u_n(0,x)=\delta(x)$.
Then  in \cite[Proof of Prop.~3.3 and Prop.~3.7]{NeilJon}, the authors introduced 
$h_n = \log u_n$ which formally satisfy
\[
\partial_t h_n = \frac12 \partial^2_x h_n + \frac12 (\partial_x h_n)^2 + \xi
+ \partial_x^2 \log (Z_{n-1}/p^{n-1}) \;.
\]
A direct computation shows that 
\[
\partial_x^2 \log (1/p(x,t)^n) = -n\partial_x^2 \log p(x,t)
=-n \cdot \frac{(\partial_x^2 p) p - (\partial_x p)^2}{p^2}
=n/t
\] 
So we arrive at the equation
$
\partial_t h_n = \frac12 \partial^2_x h_n 
	+ \frac12 (\partial_x h_n)^2 + \xi
+  \sum_{j=1}^{n-1} \partial_x^2 h_{j} + \frac{n-1}{t} $.
Of course, the system \eqref{e:MKPZintro} we study in this article
is slightly different from the setting in \cite{NeilJon};
for instance we are
on a finite interval with periodic boundary condition instead of the full line,
and we will not consider Dirac initial data, and
we  also drop the term $ \frac{n-1}{t}$ (which arises from the Dirac initial condition).

Before we formulate our main result we fix an even, smooth, compactly supported function $\rho:\R^2\to\R$, integrating to one and we set for $\eps\in (0,1]$,
\begin{equ} [e:mollifier]
\rho_\eps(t,x)= \eps^{-3}\rho(\eps^{-2}t,\eps^{-1}x),\quad \mbox{and}\quad \xi_\eps=\xi*\rho_\eps.
\end{equ}

Let $S^1$ be the unit circle and $\CC^\beta$ be the space of H\"older
continuous functions defined as in \cite{Regularity}.
Our main result is the following.

\begin{theorem}
\label{thm}
Fix $ N\in\N$ and $(h_1^{(0)},\cdots,h_N^{(0)})\in (\CC^\beta(S^1))^{\otimes N}$ with $\beta\in(0,1)$.
For each $i\in\{1,2,\ldots, N\}$, there exists a sequence of constants 
$(C_{\varepsilon,i}:1\le i \le N)$  tending to infinity as $\varepsilon$ tends to zero, such that the solutions $(h_{\eps,1},\cdots,h_{\eps,N})$
to the sequence of systems of $N$ equations
\begin{equ} \label{eq:MKPZ}
\partial_t h_{\varepsilon,i} 
= \sum_{j=1}^{i} \partial_x^2 h_{\varepsilon,j} 
+ (\partial_x h_{\varepsilon,i})^2 + \xi_{\varepsilon}-C_{\varepsilon,i} 
\end{equ}
starting from initial condition $(h_1^{(0)},\cdots,h_N^{(0)})$
converge in probability to $N$ random fields $(h_1,\cdots,h_N)\in\CC^\eta([0,T],S^1)^{\otimes N}$
for any $\eta\in(0,\beta\wedge \frac12)$. 
Here, the constants $(C_{\varepsilon,i}:1\le i \le N)$ are such that
\begin{equs}
C_{\eps,1} & =\eps^{-1} C_1 +  c_1 \\
C_{\eps,i} & =\eps^{-1} C_i + \tilde C_i \log\eps + c_i  \qquad (2\le i\le N)
\end{equs}
for finite constants $C_i,c_i,\tilde C_j$ ($1\le i\le N$, $2\le j\le N$).
Moreover, if $C_1$ is chosen to suitably depend on $\rho$ (defined in \eqref{e:mollifier}) then the limit $(h_{1},\dots,h_{N})$ does not depend on the choice of the mollifier $\rho$ and 
$h_1$ agrees with the Hole-Copf solution of the KPZ equation driven by space-time white noise.
\end{theorem}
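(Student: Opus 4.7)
The strategy is to proceed by induction on the layer index $i$, embedding the entire $N$-equation system into a single enlarged regularity structure. The base case $i=1$ is the classical KPZ equation driven by $\xi_\eps$: Hairer's theory (or the Hopf--Cole approach) yields a limit $h_1$ with counterterm of the form $C_{\eps,1} = \eps^{-1} C_1 + c_1$, and an appropriate choice of $C_1$ in terms of $\rho$ makes $h_1$ coincide with the Hopf--Cole solution. This also disposes of the mollifier-independence and Hopf--Cole statements at the first layer, which is the only place where mollifier dependence can enter a divergent constant.

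For the inductive step, assume $h_{\eps,1},\dots,h_{\eps,i-1}$ have been jointly constructed and shown to converge to limits of parabolic regularity $\tfrac12-\kappa$. Rewriting the $i$-th equation as
\[
(\partial_t - \partial_x^2) h_{\eps,i}
= (\partial_x h_{\eps,i})^2 + \Big(\xi_\eps + \sum_{j=1}^{i-1} \partial_x^2 h_{\eps,j}\Big) - C_{\eps,i},
\]
one sees that $h_{\eps,i}$ satisfies a KPZ-like equation whose effective noise has the same parabolic regularity $-\tfrac32-\kappa$ as $\xi$, since $\partial_x^2 h_{\eps,j}$ is two derivatives of an object of regularity $\tfrac12-\kappa$. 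I would enlarge the regularity structure by adding, alongside the noise symbol $\Xi$, abstract symbols for each driver $\partial_x^2 h_j$ with $j<i$, generate the tree basis by iterated abstract integration and multiplication, and formulate an abstract fixed-point problem for $h_i$ in a suitable space of modelled distributions.

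The main analytic task is then to construct admissible BPHZ models $(\PPi^\eps,\GGamma{}^\eps)$ for this enlarged structure using $\xi_\eps$ and the previously renormalized lower layers as input, and to prove their convergence in probability. This splits into: (i) enumerating the finitely many trees of negative degree that arise in the fixed-point expansion of $(\partial_x h_i)^2$ once the additional drivers are present; (ii) computing the associated BPHZ constants and checking that they assemble into a single scalar counterterm of the form $\eps^{-1} C_i + \tilde C_i \log\eps + c_i$; and (iii) proving uniform stochastic moment bounds and convergence via the standard criteria for BPHZ models.

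The crux is step (ii). In the single-layer KPZ case, several potentially logarithmically divergent trees cancel thanks to the symmetry of the Gaussian noise $\Xi$. Once the driver $\partial_x^2 h_j$ replaces $\Xi$ at one or more leaves, the associated graphs are no longer symmetric in the required way, so the logarithmic counterterms $\tilde C_i \log\eps$ become genuinely non-trivial for $i\ge 2$. Deriving the explicit combinatorial formulae for $\tilde C_i$ and $c_i$ requires case-by-case inspection of the relevant trees and computation of their stochastic moments using the Wiener-chaos representation of the previously constructed layers. With step (ii) in hand, continuity of the abstract fixed-point map together with convergence of the models delivers convergence in probability of $(h_{\eps,1},\dots,h_{\eps,N})$ in the claimed H\"older topology, and the induction closes; mollifier-independence propagates to all layers because $C_1$ absorbs the only mollifier-dependent divergence while the higher-layer constants $\tilde C_i,c_i$ are determined by universal moments of the limiting model.
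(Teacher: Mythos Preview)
Your inductive strategy of treating $\partial_x^2 h_j$ for $j<i$ as additional abstract noise symbols is \emph{not} the route the paper takes, and as written it has a genuine gap.

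The paper instead observes (Lemma~\ref{lem:mild}) that the mild formulation of the full system can be rewritten so that the terms $\partial_x^2 h_{\eps,j}$ are absorbed into a family of modified integration kernels $\overline{G}_{j-i}=G\ast\partial_x^2G\ast(\delta+\partial_x^2G)^{\ast(j-i-1)}$. The resulting regularity structure has a \emph{single} Gaussian noise symbol $\Xi$ but several integration operators $\CI_{(j,i)}$, one for each truncated kernel $K_{j-i}$. Because the only noise is still Gaussian space--time white noise, the black boxes of \cite{BHZ16} and \cite{CH16} apply verbatim to give convergence of the BPHZ models, and the abstract fixed point is solved for all $N$ layers simultaneously in one $\CD^{\gamma,\eta}$ space (the induction in Lemma~\ref{lem:abs-fp-sol} is only at the level of the deterministic fixed point, not the model construction). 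The renormalisation constants are then computed tree by tree using the explicit form of the kernels $G_k$ and the Hermite-polynomial identities in Section~\ref{sec:identity}.

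Your proposal, by contrast, would require building and converging BPHZ models for a structure whose ``noises'' include the objects $\partial_x^2 h_{\eps,j}$. These are neither Gaussian nor stationary, and their cumulants of all orders involve the full nonlinear solution at lower layers; the ``standard criteria for BPHZ models'' you invoke (essentially \cite{CH16}) do \emph{not} cover such inputs. Proving the needed moment bounds and convergence for trees built from these composite drivers would amount to redoing, by hand and in a more awkward parametrisation, exactly the analysis the paper performs via the kernels $\overline{G}_{j-i}$. In particular, the renormalisation constants you would have to compute are expectations of products of $\partial_x^2 h_{\eps,j}$ with themselves and with $\xi_\eps$, which unfold into the same iterated heat-kernel integrals the paper organises through the $D_{i,j}$ identities. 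Your claim that ``$C_1$ absorbs the only mollifier-dependent divergence'' is also not correct: every $C_{\eps,i}$ depends on $\rho$; mollifier-independence of the limit comes from uniqueness of the BPHZ limit model together with Lemma~\ref{lem: hopf cole vs bphz constants}, not from any special role of $C_1$.

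In short, the right move is to push the coupling $\sum_{j<i}\partial_x^2 h_j$ into the integral operator rather than into the noise, after which the existing Gaussian BPHZ theory applies directly.
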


Recall that for the standard KPZ equation \cite{KPZ},
and the manifold-valued KPZ type equation \eqref{e:Hairer-String} in the geometric setting,
 and the vector valued KPZ system \eqref{e:Funaki-KPZ} under the so called ``trilinear" condition therein,  the logarithmic renormalization constants cancel out.
 In our case, we expect that $\tilde C_j$ in general do not vanish for $j\ge 2$, but we have not found a general argument to prove this (however, see Remark~\ref{rem:logs} for the first several layers.)
For the standard KPZ equation,
the cancellation of logarithmic renormalization is ``expected"
because the It\^o term appearing  from the It\^o calculus for the Cole-Hopf transform behaves as $1/\eps$,
and the linear stochastic heat equation is defined 
in It\^o sense.
For the multilayer KPZ however,
there is no Cole-Hopf transform that linearizes the system,
and the corresponding nonlinear  multilayer stochastic heat equation
 can not be simply defined in the It\^o sense.
Requiring  non-vanishing
 logarithmic renormalizations seems not surprising in our case.

We think that it may be of interest to study  a general type of KPZ system
\begin{equ} 
\partial_t h^\alpha = \frac12 \sum_{\beta=1}^\alpha
\nu_{\beta}^\alpha  \partial_x^2 h^\beta 
	+ \frac12  \sum_{\beta,\gamma=1}^n  
		\Gamma_{\beta\gamma}^\alpha  \partial_x h^\beta \partial_x h^\gamma 
	+ \sum_{\beta=1}^n \sigma^\alpha_\beta \xi^\beta
\qquad \alpha\in\{1,2,\cdots, N\} \;,
\end{equ}
where $\nu_{\alpha}^\alpha>0$ and $\nu_{\beta}^\alpha=0$ when $\beta>\alpha$. In particular, finding the condition for the coefficients under which the  logarithmic renormalizations  cancel would be very interesting.

\vspace{2ex}
{\bf Acknowledgement.}
A.Chandra gratefully acknowledges financial support from the Leverhulme Trust via an Early Career Fellowship, ECF-2017-226.
H.Shen gratefully acknowledges financial support by the NSF award DMS-1712684 and DMS-1909525.

\section{Notation and formulation}
\label{S2}

\emph{Notation.} 
We will denote by $\binom{j}{i}$ the binomial coefficients,
with convention $\binom{j}{i}=0$ when $i,j\ge 0$ and $j<i$.
We sometimes also need the case that $i$ or $j$ is $-1$: we have
$\binom{n}{-1}=0$ for $n\ge 0$ and $\binom{-1}{-1}=\binom{0}{0}=1$.

We start by rewriting \eqref{eq:MKPZ} in a mild formulation. Let 
\[
G(t,x)
\eqdef
\sum_{k \in 2 \pi \Z}
\frac{1}{\sqrt{4 \pi t}}
\exp \left[ - \frac{(x-k)^{2}}{4t} 
\right]
\]
for $t> 0$ and $G(t,x)=0$ for $t\le 0$
be the heat kernel on $S^{1}$. For each  $i \in \N$, we recursively define kernels $G_{i}$ by setting $G_{0} \eqdef G$ and for $i > 0$ setting $G_{i}
\eqdef
G \ast \partial_{x}^{2}G_{i-1}$, so that
\[
G_i = G * (\partial_{x}^{2}G)^{*i}
\]
where $*$ stands for convolution over space and time variables,
and for any function $H$ over space-time $H^{*i}$ stands for the convolution of $i$ copies of $H$.
We also define for $j \ge 1$, 
\begin{equ} [e:defBbar]
\overline{G}_{j} 
\eqdef  G* \partial_{x}^{2}G * (\delta + \partial_{x}^{2}G)^{*(j-1)}
= \sum_{i=1}^{j} \binom{j-1}{i-1} G_{i}
\end{equ}
where $\delta$ is the Dirac distribution, and for any function $F$ it is understood that $F^{*0} =\delta$. The last equality of \eqref{e:defBbar} follows by a binomial identity.
Moreover we define $\overline{G}_{0} \eqdef G$. For $\eps\in (0,1]$ 
and $\rho_\e$ as in \eqref{e:mollifier}, 
we  introduce the shorthands
\[
G_{i,\eps}= G_i*\rho_\eps \;,
\qquad
\overline{G}_{i,\eps}= \overline{G}_{i}*\rho_\eps\;.
\]
Furthermore, if $h$ is a function {\it only} defined in space, then with a slight abuse of notation we also write
\begin{equ}
[e:spaceconvolution]
(\overline{G}_j*h)(t,x)= \int \overline{G}_j(t,x-y)h(y)\, dy.
\end{equ}
We then have the following lemma.
\begin{lemma}\label{lem:mild}
For any $N \in \N$ the mild formulation of the (unrenormalized) system \eqref{eq:MKPZ} is given by
\begin{equ}\label{mildMKPZ}
h_{\varepsilon,j} = 
\sum_{i = 1}^{j}
\overline{G}_{j-i} \ast \left[ 
(\partial_{x} h_{\varepsilon,i})^{2}
+
\xi_{\varepsilon}
+h_{i}^{(0)}
\right] 
 \quad \textnormal{for }1 \le j \le N.
\end{equ}
\end{lemma}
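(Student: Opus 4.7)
The plan is to proceed by induction on $j$. For the base case $j = 1$ the system reduces to the single mollified KPZ equation for $h_{\varepsilon,1}$, whose standard mild formulation $h_{\varepsilon,1} = G * [(\partial_x h_{\varepsilon,1})^2 + \xi_\varepsilon + h_1^{(0)}]$ matches the right-hand side of \eqref{mildMKPZ}, since $\overline{G}_0 = G$.

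For the inductive step at layer $j \geq 2$, the first move is to apply Duhamel's formula to the $j$-th equation of \eqref{eq:MKPZ}, treating the cross-diffusion terms $\partial_x^2 h_{\varepsilon,i}$ with $i < j$ as inhomogeneous source terms. Writing $F_i := (\partial_x h_{\varepsilon,i})^2 + \xi_\varepsilon$ for brevity and interpreting the initial data via the spatial convolution convention of \eqref{e:spaceconvolution}, this gives
\[
h_{\varepsilon,j} = G * [F_j + h_j^{(0)}] + \sum_{i=1}^{j-1} G * \partial_x^2 h_{\varepsilon,i}.
\]
Substituting the inductive hypothesis for each $h_{\varepsilon,i}$ with $i < j$ and swapping the order of summation, the identity to be proved reduces to the purely algebraic convolution identity
\[
\sum_{m=0}^{n} G * \partial_x^2 \overline{G}_m = \overline{G}_{n+1}, \qquad n \geq 0,
\]
applied with $n = j-1-k$ for each $k \in \{1,\ldots,j-1\}$; this produces precisely the missing $\overline{G}_{j-k} * [F_k + h_k^{(0)}]$ contributions.

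To prove this convolution identity, set $A := \partial_x^2 G$ so that $\overline{G}_0 = G$ and $\overline{G}_m = G * A * (\delta + A)^{*(m-1)}$ for $m \geq 1$. The $m=0$ summand equals $G * A = \overline{G}_1$, while for $m \geq 1$ commuting differentiation through convolution gives $G * \partial_x^2 \overline{G}_m = G * A * A * (\delta + A)^{*(m-1)}$. Factoring $G * A$ out of the entire sum, the identity becomes
\[
\delta + \sum_{m=1}^{n} A * (\delta + A)^{*(m-1)} = (\delta + A)^{*n},
\]
which telescopes upon observing that $A * (\delta + A)^{*(m-1)} = (\delta + A)^{*m} - (\delta + A)^{*(m-1)}$. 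The whole argument is essentially an algebraic manipulation of convolutions; since $h_{\varepsilon,j}$ and $\xi_\varepsilon$ are smooth there is no analytic obstacle, and the only real care needed is in tracking the index shifts and in respecting the boundary convention $\overline{G}_0 = G$, which differs from the recursion $\overline{G}_j = G * \partial_x^2 G * (\delta + \partial_x^2 G)^{*(j-1)}$ when naively extended to $j=0$. An equivalent route uses the binomial expansion $\overline{G}_j = \sum_{i=1}^{j} \binom{j-1}{i-1} G_i$ together with the hockey-stick identity.
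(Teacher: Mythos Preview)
Your proof is correct and follows essentially the same route as the paper: induction on $j$, Duhamel for the $j$-th equation with the lower layers as sources, substitution of the inductive hypothesis, swap of summation, and reduction to the convolution identity $\sum_{m=0}^{n} G * \partial_x^2 \overline{G}_m = \overline{G}_{n+1}$. The paper proves this last identity via the geometric-series relation $(H-\delta)*\sum_{j=0}^{m-1} H^{*j} = H^{*m}-\delta$ with $H=\delta+\partial_x^2 G$, which is exactly your telescoping argument repackaged.
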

\begin{proof}
We proceed by induction in $j$. The case $j=1$ is immediate since $\overline{G}_{0} = G$. 
Suppose that the Equation~\eqref{mildMKPZ} holds for all $j \le k$. 
Then by the induction assumption we have
\begin{equs}
{} & h_{\varepsilon,k+1}
=
G \ast \left[ 
(\partial_{x}h_{\varepsilon,k+1})^{2} +
\xi_\eps + h_{k+1}^{(0)}\right] 
+ G \ast
\sum_{j = 1}^{k} 
\partial_x^2 h_{\eps,j}
\\
&=
G \ast \left[ 
(\partial_{x}h_{\varepsilon,k+1})^{2} +
\xi_\eps + h_{k+1}^{(0)}\right] 
+ G \ast
\sum_{j = 1}^{k} 
\sum_{i = 1}^{j}
\partial_x^{2} \overline{G}_{j-i} \ast \left[ 
(\partial_{x} h_{\varepsilon,i})^{2}+
\xi_\eps + h_{i}^{(0)}\right]\\
&=
G \ast
\left[ 
(\partial_{x}h_{\varepsilon,k+1})^{2}+
\xi_\eps + h_{k+1}^{(0)}
\right]
+
\sum_{i = 1}^{k} 
\Big( \sum_{j = i}^{k}
G \ast \partial_x^{2}
\overline{G}_{j-i} \Big)
\ast \left[ 
(\partial_{x} h_{\varepsilon,i})^{2}
+\xi_\eps + h_{i}^{(0)}
\right]\;.
\end{equs}
Note  that for $1 \le i \le k$, one has by $\overline{G}_{0} = G$ and \eqref{e:defBbar}
\begin{equs}
\sum_{j = i}^{k}
G \ast \partial_x^{2}
\overline{G}_{j-i} 
& =
G*\partial_x^2 G 
+ \sum_{j = i+1}^{k}
G* (\partial_{x}^{2}G)^{*2} * (\delta + \partial_{x}^{2}G)^{*(j-i-1)}
\\
&=
G*\partial_x^2 G 
+G* \partial_{x}^{2}G * \Big((\delta+\partial_x^2 G)^{*(k-i)}-\delta\Big)
\\
&=\overline{G}_{k+1-i}
\end{equs}
where we used the following identity 
\[
(H-\delta)* \sum_{j = 0}^{m-1}  H^{*j} = H^{*m}-\delta
\]
with $H=\delta+\partial_x^{2} G$ and $m=k-i$. 
Therefore the statement holds for $k+1$.
\end{proof}
We fix for the remainder of the entire paper the number of layers $N \in \N$ in the statement of Theorem~\ref{thm}. 
 
\section{Applying the theory of regularity structures}
In this section we describe how we recast our problem in the language of the theory of regularity structures \cite{Regularity}. We will invoke the machinery of \cite{BHZ16} and \cite{CH16} to define a regularity structure and renormalization group rich enough to solve \eqref{mildMKPZ} and show convergence of appropriately renormalized models. 

\subsection{Construction of the multi-layer KPZ regularity structure}
\label{sec:MKPZ-RS}

We fix a parabolic space-time scaling $\s$ on $\R^{2}$ by setting $\s = (2,1)$. 

We introduce two finite, abstract, sets $\mfL_{+} \eqdef \{\mft_{(j,i)}:\ 1 \le i \le j \le N \}$ and $\mfL_{-} = \{\mfl\}$ which, respectively, index the regularizing kernels and driving noises $\mfl$ appearing in our problem. Here, $\mfl$ represents an occurrence of the driving noise $\xi$ and $\mft_{(j,i)}$ represents an occurrence of (a truncation of) the kernel $\overline{G}_{j-i}$. 
Clearly there is some redundancy in our labeling of kernels in the sense that if $j-i = j'-i' = k \in \N$ then $\mft_{(j,i)}$ and $\mft_{(j',i')}$ both represent an occurrence of the same kernel $\overline{G}_{k}$. However, for fixed $k$, the kernel $\overline{G}_{k}$ can appear in many different equations. 
Keeping track of these separate occurrences is convenient when defining our regularity structure when it comes to describing the set of allowable trees in terms of a \emph{rule}, thus we write $\mft_{(j,i)}$ to represent the specific occurrence of $\overline{G}_{j-i}$ in the equation for \emph{the }$j$\emph{-th layer equation}.

Next, we must specify a homogeneity assignment $|\cdot|_{\s}: \mfL_{-} \sqcup \mfL_{+} \rightarrow \R$. To that end we set $|\mfl|_{\s} \eqdef -\frac{3}{2} - \kappa$  and 
$|\mft|_{\s} \eqdef 2$ for any $\mft \in \mfL_{+}$.
Here $\kappa \in (0,\frac{1}{12})$ is a parameter in the construction of our regularity structure which is fixed throughout the paper.
\subsubsection{The multilayer KPZ rule and regularity structure}
\label{S:regstructure}
\noindent\textbf{Invoking the machinery of \cite{BHZ16}}:
The relevant reference for this section is \cite[Section 5]{BHZ16}.
We denote by $\mcb{E}$ and $\mcb{N}$ the sets of edge types and node types as defined in \cite[Eq (5.5)]{BHZ16}.
We adopt a shorthand for $\mcb{E}$, writing $\CI_{(j,i)}$ to represent $(\mft_{(j,i)},0) \in \mcb{E}$, $\CI'_{(j,i)}$ to represent $(\mft_{(j,i)},(0,1)) \in \mcb{E}$, and $\Xi$ to represent $(\mfl,0)$.  

We now define a rule $R: \mfL \rightarrow \mcb{PN} \setminus \{\emptyset\}$ as in \cite[Definition 5.7]{BHZ16}. 
For any $\mfl \in \mfL_{-}$ we set $R[\mfl] \eqdef \{()\}$. For $\mft_{(j,i)}\in \mfL_{+}$ we set
\[
R[\mft_{(j,i)}]
\eqdef
\{ (\CI'_{(i,k)}, \CI'_{(i,k')}): 1 \le k,k' \le i \}
\sqcup
\{ (\CI'_{(i,k)}) : 1 \le k \le i \}
\sqcup 
\{ (\Xi),() \}
\]
This rule is clearly normal and if we define $\mathrm{reg}: \mfL \rightarrow \R$ via $\mathrm{reg}(\mfl) \eqdef -3/2 - 2 \kappa$ for $\mfl \in \mfL_{-}$ and $\mathrm{reg}(\mft) \eqdef -1/2 - 3 \kappa$ for $\mft \in \mfL_{+}$ then we see that $R$ is subcritical in the sense of \cite[Definition 5.13]{BHZ16}.  

Finally, let $\bar{R}$ be the completion of $R$ as constructed in \cite[Proposition 5.20]{BHZ16}, and $\mathscr{T} = (\mcb{T},G)$ be the associated \emph{reduced} regularity structure truncated above homogeneity $\gamma \eqdef \frac{3}{2} + 3 \kappa$.\\

\noindent\textbf{The trees of our regularity structure.}
For those readers not familiar with \cite{BHZ16} we now try to give some more intuition for what our regularity structure looks like. 
Recall that a key ingredient of a regularity structure is a graded vector space and in most applications to SPDEs (including this paper) this graded vector space is a free vector space generated by a family of rooted decorated trees. We describe how to think of these trees in a certain formal symbolic notation where edges are seen as unary operators. 

The nodes of these trees correspond to symbols of the form $\Xi$ representing an occurrence of the driving noise or to $\mathbf{X}^{k}$ for some $k \in \N^{d}$ which represents a polynomial. 
Then, $\mathbf{X}^{2}\CI'_{(3,2)}[\Xi]\CI'_{(3,2)}[\Xi]$ is an example of a tree with three nodes (corresponding to $\mathbf{X}^{2}$, the root, and two nodes of the form $\Xi$) and two edges (the two instances of $\CI'_{(3,2)}$). 
Our ``rule'' described above just enforces that if there are $\CI$ or $\CI'$ operators inside some of the $\CI'_{(j,i)}[\cdot]$ or $\CI_{(j,i)}[\cdot]$ then they must be of the form $\CI'_{(i,k)}$. 
\subsection{Graphical Notation}\label{sec: graphical notation}
In order to obtain the form of the SPDE that the BPHZ renormalized model 
satisfies we need to understand the form of the solution $H = (H_{i})_{i =1}^{N}$. 
This requires explicitly working with the trees of our regularity structure.
Since $N$ is arbitrary the set of these trees can be arbitrarily large but this is only because $\mfL_{+}$ becomes larger. 
The \emph{shapes} of the trees that we see are always just those of the single layer KPZ equation and we now introduce a graphical notation that takes advantage of this.  

We introduce the symbolic trees 
\[
\{ \<1>, \<d>, \<2>, \<20>, \<2d>, \<21>, \<40>, \<211>, \<210>, \<21d>, \<10>, \<1d>, \<11> \}\;.
\]
These symbols will be used to refer to the shape of trees. The circular nodes represent occurrences of the noise $\Xi$, thicker lines correspond to an occurrence of $\CI_{\mft}$ and the thinner line to an instance of $\CI'_{\bar{\mft}}$.

Starting from one of the symbolic trees above, we get a tree in our regularity structure by assigning the specific labels $\mft_{(j,i)} \in \mfL_{+}$ to the edges of a tree in a way that respects the rule $\bar{R}$ that determines our regularity structure. 

To lighten notation, we will sometimes denote elements of $\mfL_{+}$ just by $(j,i)$, $1 \le i \le j \le N$, rather than $\mft_{(j,i)}$.
We then write expressions like 
\[
\<2>[(j_{1},i_{1}),(j_{2},i_{2})]
\]
 where $(j_{1},i_{1})$ specifies the line to the left and $(j_{2},i_{2})$ specifies the line to the right. 

We will use this graphical notation only for trees that obey the rule $R$ rather than the larger class that that obey the weaker rule $\bar{R}$. 
The rule $R$ implements a constraint on our indices, that is for $\<2>[(j_{1},i_{1}),(j_{2},i_{2})]$ to be a tree in our regularity structure we require $j_{1} = j_{2}$. 
To avoid having to write out constraints everywhere we enforce that in our notation a symbol with a set of indices not allowed by $R$ corresponds to $0$.

Note that our notation introduces some redundancy, one has $\<2>[(j_{1},i_{1}),(j_{2},i_{2})] = \<2>[(j_{2},i_{2}),(j_{1},i_{1})]$.
 However, this will turn out to be a feature rather than a bug as it will free us from keeping track of some symmetry factors in the formulas to follow. 

For larger trees we label edges \emph{from left to right}, \emph{going from the top to the bottom}. For instance, we have
\begin{equs}
\<40>[(j_{1},i_{1}),\dots,(j_{6},i_{6})]
=&
\CI'_{(j_{5},i_{5})}
\Big[
\CI'_{(j_{1},i_{1})}[\Xi] \CI'_{(j_{2},i_{2})}[\Xi]
\Big]
\CI'_{(j_{6},i_{6})}
\Big[
\CI'_{(j_{3},i_{3})}[\Xi]\CI'_{(j_{4},i_{4})}[\Xi]
\Big]\\
\<211>[(j_{1},i_{1}),\dots,(j_{6},i_{6})]
=&
\CI'_{(j_{5},i_{5})}
\Big[
\CI'_{(j_{3},i_{3})} 
\big[ \CI'_{(j_{1},i_{1})}[\Xi] \CI'_{(j_{2},i_{2})}[\Xi]
\big]
\CI'_{(j_{4},i_{4})}[\Xi]
\Big]
\CI'_{(j_{6},i_{6})}[\Xi]
\end{equs}
For the tree $\<40>$ the constraint on indices is 
\[
j_{3} = j_{4} = i_{6},\ j_{2} = j_{1} = i_{5},\ j_{5}=j_{6}\;.
\] 
For the tree $\<211>$ the constraint on indices is
\[
j_{1} = j_{2} = i_{3},\ j_{3} = j_{4} = i_{5},\ j_{5} = j_{6}\;.
\] 
We will often just write expressions like $\<40>[(j_{k},i_{k})_{k=1}^{6}]$ to save space.
When we write $(j_{k},i_{k})_{k=1}^{l}$ it is understood we are referring to a tuple of pairs $(j_{1},i_{1}),\dots,(j_{l},i_{l})$ with $1 \le i_{k} \le j_{k} \le N$ for each $1 \le k \le l$.

As a final example, for $\<2d>[(j_{k},i_{k})_{k=1}^{3}]$ the constraint on indices is $j_{1} = j_{2} = i_{3}$. 
\subsection{Models for the Multilayer KPZ Equation} \label{sec:The MKPZ Model}
We fix kernels $K_0,\ldots, K_{N-1}$ mapping $\R^2$ into $\R$ such that each $K_i$ (a) is compactly supported in the ball of  radius one around the origin, (b) is symmetric in space, (c) coincides with $\overline{G}_{i}$ on the ball of radius $1/2$, and (d) for every polynomial of $\s$-degree 2 one has
$\int_{\R^{2}} K_i(z)Q(z)\, dz=0$.

By~\cite[Lemma 5.5]{Regularity} and Proposition~\ref{Prop:G} the $K_i$'s can be additionally chosen such that one has the identity $\overline{G}_i=K_i+R_i$, for a smooth remainder $R_i$.

The assignment of kernels to elements of $\mfL_{+}$ is then given by $\mft_{(j,i)} \in \mfL_{+}$, $K_{\mft_{(j,i)}} \eqdef K_{j - i}$. 

We write $\mathscr{M}_{\infty}(\mathscr{T})$ and $\mathscr{M}_{0}(\mathscr{T})$ the spaces of smooth admissible models on $\mathscr{T}$ and its closure, respectively.

\begin{definition}\label{def: BPHZ lift}
Given any stationary $\mcb{C}(\R^{3})$-valued random variable $\zeta$ we denote by $Z^{\zeta}_{\BPHZ}$ the $\mathscr{M}_{\infty}(\mathscr{T})$-valued random variable called the BPHZ lift of $\zeta$, see \cite[Theorem 6.17]{BHZ16} for the definition of the BPHZ lift. 
\end{definition}
The following proposition
produces  the random admissible model that will allow us to describe a solution to the multilayer KPZ equation.

\begin{proposition} Let $\xi$ denote space-time white noise. The BPHZ lift admits an extension to $\xi$ in the following sense -- there exists a $\mathscr{M}_{0}(\mathscr{T})$-valued random variable $Z_{\BPHZ}^{\xi}$ such that for any smooth compactly supported function $\rho: \R^{2} \rightarrow \R$ with $\int_{\R^{2}} \rho(z) dz = 1$ one has that the random models $Z^{\xi \ast \rho_{\eps}}_{\BPHZ}$ converges in probability to $Z_{\BPHZ}^{\xi}$ on $\mathscr{M}_{0}(\mathscr{T})$ as $\eps \downarrow 0$ where $\rho_{\eps}$ is defined as in \eqref{e:mollifier}.
\end{proposition}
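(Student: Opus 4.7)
The plan is to appeal directly to the general BPHZ convergence theorem for Gaussian models established by Chandra and Hairer, namely \cite[Theorem 2.15]{CH16}. That theorem asserts that, given a regularity structure built from a normal, subcritical, complete rule, equipped with a kernel assignment satisfying standard analytic requirements, and given a stationary Gaussian noise whose cumulants satisfy explicit H\"older-type bounds matching the homogeneity of the noise label, the BPHZ lift of any sequence of smooth approximations of the noise converges in probability to a limit model in $\mathscr{M}_{0}(\mathscr{T})$, independent of the approximating family.

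Most of the structural hypotheses have already been verified in Section~\ref{sec:MKPZ-RS}: the completion $\bar R$ is normal and subcritical with respect to the regularity function $\mathrm{reg}$ defined there, and the edge homogeneities $|\mathfrak{t}|_\mathfrak{s}=2$ and $|\mathfrak{l}|_\mathfrak{s}=-\tfrac{3}{2}-\kappa$ lie in the admissible range for $\kappa\in(0,\tfrac{1}{12})$. On the kernel side, the analytic hypothesis is supplied by the kernels $K_{0},\ldots,K_{N-1}$ introduced at the beginning of Section~\ref{sec:The MKPZ Model}: each $K_k$ is compactly supported near the origin, symmetric in space, annihilates polynomials of parabolic degree $\le 2$, and by \cite[Lemma 5.5]{Regularity} together with Proposition~\ref{Prop:G} differs from $\overline{G}_{k}$ by a smooth remainder, so it inherits the scaling bound $|D^{\alpha}K_k(z)|\lesssim \|z\|_\mathfrak{s}^{-1-|\alpha|_\mathfrak{s}}$ demanded by CH16 for a kernel of homogeneity $2$ in scaling dimension $|\mathfrak{s}|=3$. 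The kernel assignment $\mathfrak{t}_{(j,i)}\mapsto K_{j-i}$ then satisfies all the requirements of the Chandra-Hairer framework.

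The remaining input is the cumulant bound on the noise, which for space-time white noise is immediate: Gaussianity leaves only the 2-point cumulant, which equals the $\delta$-distribution on the diagonal and trivially satisfies the required bound at homogeneity $|\mathfrak{l}|_\mathfrak{s}=-\tfrac{3}{2}-\kappa$. The analogous bound, together with the approximation estimate comparing $\xi*\rho_\varepsilon$ with $\xi$, holds uniformly in $\varepsilon\in(0,1]$ since $\rho$ is compactly supported and integrates to one. Applying \cite[Theorem 2.15]{CH16} then yields, in one stroke, both the existence of $Z^{\xi}_{\BPHZ}$ as a limit in $\mathscr{M}_{0}(\mathscr{T})$ and its independence of the mollifier~$\rho$.

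The main technical nuisance in carrying out this plan is that our regularity structure distinguishes $O(N^{2})$ different edge labels $\mathfrak{t}_{(j,i)}$, although these collapse onto only the $N$ underlying kernels $K_{0},\ldots,K_{N-1}$. One must check that the stochastic and analytic estimates in \cite{CH16} depend only on the finite collection of kernels (through the scaling bounds above) and on the finite set of tree shapes arising in the regularity structure, so that the combinatorial redundancy introduced by distinguishing the edge labels does not obstruct the moment estimates. Once this bookkeeping point is in place, no new probabilistic argument is required beyond what is already contained in \cite{BHZ16} and \cite{CH16}.
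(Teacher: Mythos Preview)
Your approach is essentially the same as the paper's: both invoke \cite[Theorem~2.15]{CH16} and argue that its hypotheses are met. However, you have not verified the specific conditions that the paper singles out as the actual content of the check. Beyond subcriticality and the kernel/cumulant hypotheses you discuss, \cite[Theorem~2.15]{CH16} requires two homogeneity bounds on every tree $\tau \in \mcb{T}$ with $\tau \neq \Xi$: namely $|\tau|_{\s} > -|\s|/2 = -3/2$ and $|\tau|_{\s} + |\Xi|_{\s} + |\s| > 0$. These are what guarantee that the variances in the local expansions do not diverge. Your remark that ``the edge homogeneities $|\mathfrak{t}|_{\s}=2$ and $|\mathfrak{l}|_{\s}=-\tfrac{3}{2}-\kappa$ lie in the admissible range'' addresses only the labels, not the trees built from them, and so does not cover these conditions.

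The paper's proof consists precisely of this missing step: it identifies the trees of lowest homogeneity (those of the form $\CI'_{(j,i)}[\Xi]^{2}$, with $|\tau|_{\s} = -1-2\kappa$) and observes that both inequalities hold provided $\kappa \in (0,1/6)$, which is implied by the standing assumption $\kappa\in(0,1/12)$. Conversely, the hypotheses you do spell out---the kernel scaling bounds via Proposition~\ref{Prop:G} and the trivial cumulant bound for Gaussian white noise---are taken for granted in the paper's proof. Your closing paragraph about the $O(N^{2})$ redundant edge labels is a non-issue: the estimates in \cite{CH16} are stated tree-by-tree and the regularity structure here has only finitely many trees below the fixed truncation level $\gamma$, so no uniformity argument over $N$ is needed.
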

\begin{proof}
This proposition follows from \cite[Theorem 2.15]{CH16}. In addition to subcriticality \cite[Theorem 2.15]{CH16} enforces certain assumptions on the homogeneities of $\tau \in \mcb{T}$ with $\tau \neq \Xi$. 
In our setting all one must check is that for every such $\tau$ one has $|\tau|_{\s} > -|\s|/2$. The second is that any such $\tau$ must have $|\tau|_{\s} + |\Xi|_{\s} + |\s| > 0$. Both of these conditions guarantee that none of the random fields in our local expansions will have diverging variances. 

One can easily check that the worst such trees are those of the form $\CI_{(j,i)}'[\Xi]^{2}$ which have homogeneity $-1-2\kappa$. 
 We see that  $\kappa \in (0,1/6)$  suffices to guarantee the two conditions mentioned above.
\end{proof}
\begin{remark}
Note that the solution we refer to in Theorem~\ref{thm} will be a finite shift of the solution 	obtained from the BPHZ model.
\end{remark}

\subsection{Abstract Fixed Point Problem}
In view of Lemma~\ref{lem:mild} the fixed point problem associated to~\eqref{eq:MKPZ} can be written as
\begin{equ}\label{eq:abstractfp}
\vec{H}=\CP\big( \vec{(\DD H)^2} + \vec{\Xi} \big)+ P h^{(0)}.
\end{equ}
The above fixed point problem  should be seen as a vector fixed point problem where the components of vector are elements of an appropriate $\CD^{\gamma,\eta}$-space. It can  be compared to~\cite[Eq (2.12)]{NonGaussShen}. 
We now make it more precise by specifying the notation used aboved.  

$\CP$ is an $N$ by $N$ matrix of operators on a given $\CD^{\gamma,\eta}$ space.
Writing the matrix components $\CP = (\CP_{j,i})_{i,j=1}^{n}$ one has $\CP_{j,i} = 0$ for $j < i$ and for $j \ge i$ we set $\CP_{j,i}$ to be an abstract integration operator corresponding to $\overline{G}_{j-i}$. In particular, we use the decomposition $(K_{j-i},R_{j-i})$ to specify the local rough part and non-local smooth part of the operator - see \cite[Section 5]{Regularity}.
 Then $\vec{H}$, $\vec{(\DD H)^2}$ and $P\vec{h_{0}}$ are $N$ component vectors of elements of $\CD^{\gamma,\eta}$. 
  Writing $\vec{H} = (H_{i})_{i=1}^{N}$, $H_{i} \in D^{\gamma,\eta}$ is the modelled distribution describing the $i$-th layer of the equation. We then set 
  \[
  \vec{(\DD H)^2} 
  \eqdef
  \Big( 
  (\DD H_{i})^{2}
  \Big)_{i=1}^{N}
  \] 
  Finally, one has $Ph^{(0)} \eqdef (Ph^{(0)}_{i})_{i=1}^{N}$ where $h_{i}^{(0)}$ is the initial data for the $i$-th layer as referenced in Theorem~\ref{thm} and $Ph^{(0)}_{i}$ is the result of applying the heat kernel $G$ to this initial data and lifting it to $\CD^{\gamma,\eta}$ as in \cite[Lemma 7.5]{Regularity}.

We have the following lemma.

\begin{lemma} \label{lem:abs-fp-sol}
Choose $\gamma >3/2+\kappa$ and $\eta>0$ such that each $h_{i,0}\in \CC^{\eta}$. Then, for every admissible model, there exists $T>0$ such that the fixed point problem~\eqref{eq:abstractfp} has a unique solution in $\vec{H} \in \CD^{\gamma,\eta} ([0,T]\times S^1)$.
\end{lemma}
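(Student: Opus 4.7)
The plan is to solve the fixed point problem~\eqref{eq:abstractfp} by a standard Banach fixed point argument in the space $\CD^{\gamma,\eta}([0,T]\times S^1)^{\otimes N}$, exploiting the lower triangular structure of $\CP$ to reduce to a layer-by-layer argument. Concretely, I would define the map
\[
\M\vec H \;\eqdef\; \CP\big( \vec{(\DD H)^2} + \vec{\Xi} \big) + P h^{(0)}
\]
and show that on a closed ball $\cB_R \subset \CD^{\gamma,\eta}$, for $T$ sufficiently small, $\M$ maps $\cB_R$ to itself and is a strict contraction.

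First I would verify that the analytic ingredients go through. The noise $\Xi$ has homogeneity $-\tfrac32-\kappa$, so by the Schauder estimate for abstract integration (cf.~\cite[Thm.~5.12, Thm.~7.1]{Regularity}) each entry $\CP_{j,i}$, corresponding to convolution with $\overline G_{j-i} = K_{j-i}+R_{j-i}$, maps $\CD^{\alpha,\eta}$ into $\CD^{\alpha+2,(\eta+2)\wedge(\alpha+2)}$. Since $H_i \in \CD^{\gamma,\eta}$ and the derivative $\DD$ lowers regularity by $1$, we have $\DD H_i \in \CD^{\gamma-1,\eta-1}$; this element has minimal homogeneity $-\tfrac12-\kappa$. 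With $\gamma>\tfrac32+\kappa$ fixed, the product $(\DD H_i)^2$ is defined in $\CD^{2\gamma-2,2\eta-2}$ after truncation (the multiplication theorem~\cite[Prop.~6.12]{Regularity} applies because $2(-\tfrac12-\kappa)=-1-2\kappa > -\tfrac32-\kappa$), and after applying $\CP_{j,i}$ it lives in $\CD^{\gamma,\eta}$ provided $\eta$ is chosen in the usual admissible range (e.g.~$\eta\in(0,\tfrac12-\kappa)$, sufficient for the singularity at $t=0$ from $Ph^{(0)}$ to be compatible via~\cite[Lem.~7.5]{Regularity}). Subcriticality from Section~\ref{S:regstructure} guarantees the rule and regularity structure are rich enough for all of these operations to stay within truncations below $\gamma$.

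Next I would exploit the lower triangular structure. Since $\CP_{j,i}=0$ for $j<i$, the $j$th component of $\M\vec H$ only depends on $H_1,\dots,H_j$, and only $H_j$ appears nonlinearly. Hence I would proceed inductively in $j$: having constructed $H_1,\dots,H_{j-1}$ on $[0,T_{j-1}]$, the equation for $H_j$ reads
\[
H_j \;=\; \CP_{j,j}\big( (\DD H_j)^2 + \Xi\big) + \sum_{i=1}^{j-1}\CP_{j,i}\big((\DD H_i)^2+\Xi\big) + P h_j^{(0)},
\]
where the second sum is a \emph{known} element of $\CD^{\gamma,\eta}$. This reduces to a single-component fixed point problem of the same analytic type as the standard KPZ fixed point problem treated in~\cite[Sec.~15]{Regularity}, the only difference being an a priori given forcing term.

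Finally, the contraction argument: by the short-time estimates built into the Schauder bound (e.g.~\cite[Prop.~6.16]{Regularity}, or the version for $\CD^{\gamma,\eta}$ in~\cite{Regularity}), one picks up a positive power $T^\theta$ when estimating $\|\CP_{j,j}(\cdot)\|_{\gamma,\eta;T}$ on functions that vanish at $t=0$. Using bilinearity of the product together with the standard identity $(\DD H_j)^2-(\DD\tilde H_j)^2=\DD(H_j-\tilde H_j)\cdot\DD(H_j+\tilde H_j)$, this yields Lipschitz bounds of the form $\|\M\vec H-\M\vec{\tilde H}\|\le C T^\theta (1+R)\|\vec H-\vec{\tilde H}\|$ on $\cB_R$, so choosing $T=T_j$ small enough (depending on the model, on $R$, and on $H_1,\dots,H_{j-1}$) gives a contraction. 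Taking $T\eqdef\min_j T_j$ and assembling the layerwise solutions gives the unique $\vec H\in\CD^{\gamma,\eta}([0,T]\times S^1)$.

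The only mildly delicate point, which I regard as the main obstacle, is verifying that the homogeneity and weight bookkeeping for the $\CD^{\gamma,\eta}$ spaces is consistent across layers: the forcing inherited from lower layers has its own time-weight behaviour near $t=0$, and one must check that the chosen $\gamma$ and $\eta$ simultaneously accommodate (i) the multiplicative estimate at each layer, (ii) the Schauder gain for each $\CP_{j,i}$, and (iii) the initial condition lift. Once the parameter inequalities (most restrictively $\gamma>\tfrac32+\kappa$ and $\eta\le\gamma\wedge(\beta)\wedge(\tfrac12-\kappa)$) are checked, the rest is mechanical.
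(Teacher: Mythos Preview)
Your proposal is correct and follows essentially the same approach as the paper: both exploit the lower triangular structure of $\CP$ to proceed by induction on the layer index, solving each $H_j$ as a single-component KPZ-type fixed point problem with known forcing from the lower layers, and then take $T=\min_j T_j$. The paper's version is more terse, invoking \cite[Theorem~7.8]{Regularity} directly at each step (and noting via Proposition~\ref{Prop:G} that the kernels $\overline{G}_{j-i}$ satisfy the required Schauder estimate), whereas you spell out the contraction-mapping and parameter-bookkeeping details that that theorem encapsulates.
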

\begin{proof}
We can proceed by induction since $\CP_{j,i} = 0$ for $j < i$. The fixed point problem associated to $H_1$ (the first coordinate of $\vec{H}$) coincides with the fixed point problem of the KPZ equation, see also~\cite[Section 2]{NonGaussShen} and it follows for instance from~\cite[Theorem~7.8]{Regularity} that, for every admissible model, there exists $T_1>0$ such that the above fixed point problem for $H_1$ has
a unique solution in 
$\CD^{\gamma,\beta} ([0,T_1]\times S^1)$. 
With the fixed point problems for $H_1,\ldots, H_i$ solved, we can then again invoke~\cite[Theorem~7.8]{Regularity} to show that there is $T_{i+1}>0$ such that the fixed point problem for $H_{i+1}$ has a unique solution in
$\CD^{\gamma,\beta} ([0,T_{i+1}]\times S^1)$;
this is because the operator $\CP_{j,i}$ does enjoy the desired Schauder estimate by
Proposition~\ref{Prop:G}.
This finishes the argument with $T=\min_i T_i$. 
\end{proof}
In order to later derive renormalised equations it will be useful to know what the solution promised in Lemma~\ref{lem:abs-fp-sol} looks like.
\begin{lemma}
Let $\gamma \in (3/2 + \kappa, 3/2 + 2 \kappa)$, $\eta > 0$, and $\vec{H} = (H_{l})_{i=l}^{N} \in \CD^{\gamma,\eta}([0,T] \times S^{1})$ be the solution to the abstract fixed point problem \eqref{eq:abstractfp} promised by Lemma~\ref{lem:abs-fp-sol}. Then $\vec{H}$ is of the following form: for $1 \le l \le N$ one has 
\begin{equs}\label{eq: shape of abstract solution}
H_{l}(z)
\eqdef&\; 
H_{l,\mathbf{1}}(z)\mathbf{1}
+
H_{l,\mathbf{X_{1}}}(z)\mathbf{X}_{1}
+
\sum_{
(j_{1},i_{1})
\atop
j_{1} = l}
\<d>[(j_{1},i_{1})]\\
{}
&\;
+
\sum_{
((j_{k},i_{k}))_{k=1}^{3}
\atop
j_{3} = l}
\<2d>[(j_{k},i_{k})_{k=1}^{3}]
+
2
\sum_{
((j_{k},i_{k}))_{k=1}^{5}
\atop
j_{5} = l}
\<21d>[(j_{k},i_{k})_{k=1}^{5}]\\
{}&\;
+
2
H_{l,\mathbf{X_{1}}}(z)
\sum_{
((j_{k},i_{k}))_{k=1}^{2}
\atop
j_{2} = l}
\<1d>[(j_{k},i_{k})_{k=1}^{2}]\;.
\end{equs}
\end{lemma}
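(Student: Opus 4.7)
My approach is to verify the ansatz directly by Picard iteration on the abstract fixed-point equation
\[
H_l = \sum_{i=1}^{l}\CP_{l,i}\bigl((\DD H_i)^2+\Xi\bigr) + Ph_l^{(0)},
\]
exploiting the truncation at $\gamma < 3/2 + 2\kappa$ so that only finitely many tree symbols survive. The induction proceeds on the layer index $l$ and on the Picard iterate, with the base case $l=1$ being the known shape of the single-layer KPZ solution.

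The polynomial and pure-noise contributions are immediate. The lift $Ph_l^{(0)}$ produces exactly the sector $H_{l,\mathbf{1}}(z)\mathbf{1} + H_{l,\mathbf{X}_1}(z)\mathbf{X}_1$, since $|\mathbf{X}_1^2|_\s = 2 > \gamma$. The noise piece $\sum_{i=1}^l \CP_{l,i}(\Xi)$ produces $\sum_{i=1}^l \CI_{(l,i)}[\Xi] = \sum_{(j_1,i_1): j_1=l}\<d>[(j_1,i_1)]$ with unit coefficient, matching the first tree sum of the ansatz.

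For the nonlinear contribution, assume the ansatz holds for $H_i$, $i\leq l$. Applying the abstract gradient $\DD$ (which sends $\mathbf{X}_1\mapsto\mathbf{1}$ and each outer $\CI$-edge to the corresponding $\CI'$-edge) yields
\[
\DD H_i = H_{i,\mathbf{X}_1}\mathbf{1} + \sum\<1>[(j,k)] + \sum\<20>[\ldots] + 2\sum\<210>[\ldots] + 2H_{i,\mathbf{X}_1}\sum\<10>[\ldots],
\]
with sums over tuples having outermost first index $i$. Squaring and discarding products of homogeneity at least $\gamma-2$, a case check (using $\kappa<1/12$) shows that the only surviving summands are $H_{i,\mathbf{X}_1}^2\mathbf{1}$, $2H_{i,\mathbf{X}_1}\sum\<1>$, $\sum\<1>\cdot\<1>$, $2H_{i,\mathbf{X}_1}\sum\<20>$, and $2\sum\<1>\cdot\<20>$; every other cross-product (involving $\<210>$, $\<10>$, or two factors of $\<20>$ and higher) exceeds the threshold. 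Applying $\CP_{l,i}$ and summing over $i\leq l$, the rule $R[\mft_{(l,i)}]$ forces the first indices of the immediate children of the outer $(l,i)$-edge to equal $i$, reproducing the $\<2d>$-sum (coefficient $1$, using the symmetric convention on the two $\CI'$-branches of $\<2>$) and the $\<21d>$-sum (coefficient $2$) with the stated index constraints. The term $\CP_{l,i}(2H_{i,\mathbf{X}_1}\sum\<20>)$ lands at homogeneity $\geq 2-2\kappa > \gamma$ and drops out.

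Finally, $\CP_{l,i}(2H_{i,\mathbf{X}_1}\sum\<1>)$ produces the $\<1d>$-sum; its coefficient in front of $\<1d>[(j_1,i_1),(l,i_2)]$ is naturally $2H_{i_2,\mathbf{X}_1}(z)$ from this iteration. Identifying it with the ansatz coefficient $2H_{l,\mathbf{X}_1}(z)$ is the main technical point of the proof: it uses the $\mathbf{X}_1$-sector of the fixed-point equation, where the polynomial contribution from $Ph_l^{(0)}$ together with the polynomial projections of the smooth non-local remainders $R_{l-i}$ in the decomposition $\overline{G}_{l-i}=K_{l-i}+R_{l-i}$ forces the $\mathbf{X}_1$-coefficients of successive layers to combine so that this common coefficient appears in front of the entire $\<1d>$-sum. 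The remaining obstacle is bookkeeping: enumerating surviving cross-products, tracking index constraints from $R$, and verifying by homogeneity that no new symbols appear at subsequent Picard iterates, so that the ansatz is indeed closed.
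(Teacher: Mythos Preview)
Your strategy—insert the ansatz into the right-hand side of \eqref{eq:abstractfp} and check that it reproduces itself after truncation at $\gamma$—is precisely the paper's, which records only the one-line assertion that this self-consistency holds. Your homogeneity bookkeeping for which cross-products in $(\DD H_i)^2$ survive the subsequent application of $\CP_{l,i}$ is correct.

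The gap is in your handling of the $\<1d>$-coefficient. You correctly compute that one Picard step produces $2H_{i_2,\mathbf{X}_1}(z)$ in front of $\<1d>[(j_1,i_1),(l,i_2)]$ (with the rule constraint $j_1=i_2$), whereas the displayed ansatz carries the common prefactor $2H_{l,\mathbf{X}_1}(z)$. Your proposed fix—that the polynomial sector of the fixed-point equation forces the $\mathbf{X}_1$-coefficients of all layers to coincide—cannot work: the coefficient of a non-polynomial symbol such as $\<1d>$ is produced solely by the abstract lift $\CI_{(l,i_2)}$ acting on the $\<1>$-component of $(\DD H_{i_2})^2$; the contributions of $Ph_l^{(0)}$ and of the smooth remainders $R_{l-i}$ land only in the polynomial sector $\mathbf{1},\mathbf{X}_1$ and never in $\<1d>$. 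Since the layers carry distinct initial data and involve different kernels $\overline{G}_{l-i}$, the functions $H_{i_2,\mathbf{X}_1}$ are generically different and there is no mechanism identifying them. The self-reproducing expansion therefore actually carries the layer-dependent coefficients $2H_{i_2,\mathbf{X}_1}$ in front of $\<1d>[(i_2,i_1),(l,i_2)]$; the common prefactor $2H_{l,\mathbf{X}_1}$ in the displayed formula appears to be a minor slip in the statement (harmless downstream, since the $\<1d>$-symbols play no role in the renormalisation constants of Section~\ref{sec:renorm}).
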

\begin{proof}
Our choice of $\gamma$ constraints the monomials that are kept in any $\CD^{\gamma,\eta}$ expansion. The fact that the solution $\vec{H}$ must be of this form can be verified by observing that it reproduces itself upon being inserted on the right hand side of \eqref{eq:abstractfp}. 
\end{proof}
\section{Renormalization}
\label{sec:renorm}
If $Z^{\eps} = (\Pi^{\eps},\Gamma^{\eps})$ is the canonical lift of $\zeta_{\eps}$ into a smooth model then the BPHZ lift of $\hat{Z}^{\eps} \eqdef (\hat{\Pi}^{\eps},\hat{\Gamma}^{\eps})$ of $\zeta_{\eps}$ can be described as a shift of $Z^{\eps}$ by an element of the renormalization group $\mathfrak{R}$, a group of linear operators from $\mcb{T}$ to itself that was characterized in \cite{BHZ16}. 

In terms of the algebraic theory of renormalization of regularity structures the situation here is essentially\footnote{The only difference being the the $(j,i)$ decorations.} the same as those found in previous treatments of the KPZ equation \cite{KPZJeremy,NonGaussShen}.   
 A full specification of the BPHZ model is unnecessary for our purposes, we will only need to understand its action on certain elements of the regularity structure present so that we can derive the renormalized equation.
\subsection{Definition of renormalization constants}
We first specify the renormalization constants of the BPHZ model. 
We set, for collections of indices compatible with the corresponding symbol,

\begin{equs}\label{eq: def of BPHZ renorm constants}
\ell_{\eps,\<2>}[(j_{k},i_{k})_{k=1}^{2}]
\eqdef&
\E
\Big[ 
\int_{(\R^{3})^{2}}
dz_{1} dz_{2} 
\prod_{p=1}^{2}
K'_{j_{p} - i_{p}}(-z_{p})\zeta_{\eps}(z_{p})
\Big]\\
\ell_{\eps,\<40>}[(j_{k},i_{k})_{k=1}^{6}]
\eqdef&
\E
\Big[ 
\int_{(\R^{3})^{2}}
dz_{5}\ dz_{6}
K'_{j_{5} - i_{5}} (-z_{5}) 
K'_{j_{6} - i_{6}} (-z_{6})\\
{}&
\qquad 
\Big(
\int_{(\R^{3})^{2}}
dz_{1} dz_{2} 
\prod_{p=1}^{2}
K'_{j_{p} - i_{p}}(z_{5} - z_{p})\zeta_{\eps}(z_{p})
\Big)\\
{}&
\qquad
\Big(
\int_{(\R^{3})^{2}}
dz_{3} dz_{5} 
\prod_{p=3}^{4}
K'_{j_{p} - i_{p}}(z_{6} - z_{p})\zeta_{\eps}(z_{p})
\Big)
\Big]\\
\ell_{\eps,\<211>}[(j_{k},i_{k})_{k=1}^{6}]
\eqdef&
\E
\Big[ 
\int_{(\R^{3})^{2}}
dz_{5}\ dz_{6}
K'_{j_{5} - i_{5}} (-z_{5}) 
K'_{j_{6} - i_{6}} (-z_{6}) \zeta_{\eps}(z_{6})\\
{}&
\qquad 
\int_{(\R^{3})^{2}}
dz_{3} dz_{5}
K'_{j_{3} - i_{3}}
(z_{5} - z_{3})
K'_{j_{4} - i_{4}}(z_{5} - z_{4})
\zeta_{\eps}(z_{4})\\
{}&
\qquad
\Big(
\int_{(\R^{3})^{2}}
dz_{1} dz_{2} 
\prod_{p=1}^{2}
K'_{j_{p} - i_{p}}(z_{3} - z_{p})\zeta_{\eps}(z_{p})
\Big)
\Big]
\end{equs}
Here, for $j \ge 0$, $K_{j}'(t,x) \eqdef \partial_{x} K_{j}(t,x)$ where $K_{j}$ is as in Section~\ref{sec:The MKPZ Model}. 

We also define $\bar{\ell}_{\eps,\<2>}[(j_{k},i_{k})_{k=1}^{2}]$, $\bar{\ell}_{\eps,\<40>}[(j_{k},i_{k})_{k=1}^{6}]$, and $\bar{\ell}_{\eps,\<211>} [(j_{k},i_{k})_{k=1}^{6}]$ to be defined analogously as to the constants in \eqref{eq: def of BPHZ renorm constants} except every instance of $K'_{j}$, $j \ge 0$ should be replaced by $\bar{G}'_{j}$. 

We again adopt the convention that $\ell_{\eps,\<2>}$, $\ell_{\eps,\<40>}$, $\ell_{\eps,\<211>}$, $\bar{\ell}_{\eps,\<2>}$, $\bar{\ell}_{\eps,\<40>}$, $\bar{\ell}_{\eps,\<211>}$ vanish if given a set of indices that do not satisfy the constraint coming from the rule $R$. 

The constants of \eqref{eq: def of BPHZ renorm constants} are those used to define the BPHZ model but this choice is not completely canonical since what we call the BPHZ model depends on the choice of kernel truncation $K_{j}$ made in Section~\ref{sec:The MKPZ Model}. 

Using instead the renormalization constants defined in terms of $\bar{G}_{j}$ in our renormalised equation will yield the solution referred to in Theorem~\ref{thm}. However, the following lemma tells us that the discrepancy between the two sets of constants amounts to a finite shift. 
\begin{lemma}\label{lem: hopf cole vs bphz constants}
One has the convergence
\begin{equs}
\lim_{\eps \downarrow 0}
\bar{\ell}_{\eps,\<2>}[(j_{k},i_{k})_{k=1}^{2}]
-
\ell_{\eps,\<2>}[(j_{k},i_{k})_{k=1}^{2}] 
=&\ 
\bar{c}_{\<2>}[(j_{k},i_{k})_{k=1}^{2}] \\
\lim_{\eps \downarrow 0}
\bar{\ell}_{\eps,\<40>}[(j_{k},i_{k})_{k=1}^{6}]
-
\ell_{\eps,\<40>}[(j_{k},i_{k})_{k=1}^{6}]
=&\ \bar{c}_{\<40>}[(j_{k},i_{k})_{k=1}^{6}] \\
\lim_{\eps \downarrow 0}
\bar{\ell}_{\eps,\<211>}[(j_{k},i_{k})_{k=1}^{6}]
-
\ell_{\eps,\<211>}[(j_{k},i_{k})_{k=1}^{6}]
=&\ \bar{c}_{\<211>}[(j_{k},i_{k})_{k=1}^{6}]\;,
\end{equs}
where the three constants on the right hand sides above are all finite and we restrict the indices $((j_{k},i_{k})_{k=1}^{2})$ or $((j_{k},i_{k}))_{k=1}^{6}$ are chosen to be compatible with the corresponding symbol. 
\end{lemma}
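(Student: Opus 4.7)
The proof exploits two structural facts. First, by the construction in Section~\ref{sec:The MKPZ Model}, one has $\bar{G}_j = K_j + R_j$ with $R_j$ smooth on all of $\R^2$ and vanishing on the ball $B(0,1/2)$ (since by item (c) of Section~\ref{sec:The MKPZ Model} $K_j = \bar{G}_j$ there), so that $R'_j \eqdef \partial_x R_j$ is smooth and vanishes on $B(0,1/2)$ as well. Second, the ultraviolet divergences in $\ell_{\eps,\tau}$ and $\bar{\ell}_{\eps,\tau}$ originate from configurations in which subsets of the integration variables collapse to a single point, and on such collapsing regions the kernels $K'_j$ and $\bar{G}'_j$ are identical.

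The plan is as follows. For each $\tau \in \{\<2>,\<40>,\<211>\}$, substitute $\bar{G}'_j = K'_j + R'_j$ into $\bar{\ell}_{\eps,\tau}$ and expand to obtain
\begin{equs}
\bar{\ell}_{\eps,\tau} - \ell_{\eps,\tau}
= \sum_{\emptyset \neq S \subseteq E(\tau)}
\E\Big[ \int \prod_{e \in S} R'_{j_e-i_e}(\Delta_e z) \prod_{e \notin S} K'_{j_e-i_e}(\Delta_e z) \prod_{v \in L(\tau)} \zeta_\eps(z_v)\, dz \Big],
\end{equs}
where $E(\tau)$ and $L(\tau)$ denote the edge and leaf sets of $\tau$, and $\Delta_e z$ denotes the appropriate difference of endpoint variables for edge $e$. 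For $\tau = \<2>$, this reduces to three terms, each containing at least one smooth factor $R'_j$ vanishing on $B(0,1/2)$. After the Wick contraction yields $\rho_\eps^{\ast 2}(z_1-z_2)$, the $R'_j$ factor forces at least one of the variables to stay at distance $\ge 1/2$ from the other, on which the remaining ($K'$ or $R'$) factor is smooth and integrable; dominated convergence then produces the finite limit $\bar{c}_{\<2>}[(j_k,i_k)_{k=1}^2]$.

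For $\tau \in \{\<40>,\<211>\}$ the expansion contains up to $2^6 - 1$ terms, and individual summands may still carry sub-divergences since a single $R'$-edge only prevents the collapse of its two endpoints. To address this, group summands according to their divergent sub-diagrams: within each group, every divergent contribution comes from a Wick pairing in which certain variables collapse to within distance $1/2$ of one another, so any $R'$-factor attached to an edge of the collapsing sub-diagram vanishes identically there. Consequently only the $K'$-factors survive near the singularity, and they match exactly the corresponding divergent contribution in $\ell_{\eps,\tau}$ coming from the same sub-diagram. The divergent parts therefore cancel in the difference $\bar{\ell}_{\eps,\tau} - \ell_{\eps,\tau}$; the remaining pieces are controlled uniformly in $\eps$ via the graph-analytic moment bounds of \cite[Thm.~2.15]{CH16} and converge by dominated convergence to the finite constants $\bar{c}_{\<40>}$ and $\bar{c}_{\<211>}$.

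The main obstacle is making the sub-divergence cancellation for $\<40>$ and $\<211>$ fully rigorous, which requires careful bookkeeping across all subsets $S$ and all relevant Wick pairings. A streamlined alternative, which I expect to be cleaner for the write-up, is to fix a smooth radial cutoff $\chi$ supported in $B(0,1/2)$ and decompose $K_j = \chi K_j + (1-\chi) K_j$ together with the analogous splitting for $\bar{G}_j$; using the pointwise identity $\chi K_j = \chi \bar{G}_j$ the ``local'' pieces cancel identically in $\bar{\ell}_{\eps,\tau} - \ell_{\eps,\tau}$, and one is left with a sum of contributions each containing at least one factor of $(1-\chi) K_j$ or $(1-\chi) \bar{G}_j$, each of which is finite in the $\eps\downarrow 0$ limit by standard power counting.
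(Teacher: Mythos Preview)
Your expansion $\bar G'_j = K'_j + R'_j$ is a natural first move and is \emph{not} the route the paper takes. The paper instead invokes the rescaling machinery of \cite[Section~6.3]{KPZJeremy}: by parabolic scale invariance one has $\bar\ell_{\eps,\<2>} = \eps^{-1}\bar\ell_{1,\<2>}$, and for the logarithmically divergent symbols $\<40>$, $\<211>$ one shows that the rescaled truncated kernels $K_j\ast\rho$, $\partial_x K_j\ast\rho$ converge to $\bar G_j\ast\rho$, $\partial_x\bar G_j\ast\rho$ in the semi-normed kernel spaces of \cite[Lemma~6.8]{KPZJeremy}, after which \cite[Lemma~6.11]{KPZJeremy} controls the discrepancy in the renormalisation constants directly. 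This argument is short because it recycles an existing black box; your approach is more elementary but needs more to be said.

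There is a genuine gap in your treatment of $\<211>$. You claim that after expanding, the sub-divergent parts of the terms with at least one $R'$ ``match exactly the corresponding divergent contribution in $\ell_{\eps,\tau}$'' and therefore cancel in the difference. This cannot be right: $\ell_{\eps,\tau}$ has already been subtracted, so the difference contains \emph{only} terms with at least one $R'$, and nothing is left for these to cancel against. Concretely, for the connected Wick pairing of $\<211>$ the contracted graph has a critical sub-divergence when the two internal vertices $z_3,z_5$ coalesce (the edge joining them carries $K'_{m_3}\cdot D$ of total order $-3$ in parabolic dimension $3$). If your single $R'$ sits on an edge \emph{outside} this sub-diagram --- say on $e_6$ --- it does nothing to tame that local singularity. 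What actually rescues you is the parity observation the paper records in Lemma~\ref{lem: renormalisation of symbols}: $K'_{m_3}$ is odd in the spatial variable while the contracted kernel $D$ is even, so after Taylor-expanding the remaining (smooth, since $|z_5|\ge 1/2$) factor at $z_3=z_5$ the leading critical piece integrates to zero and the remainder is absolutely convergent. Without invoking this parity argument your bound does not close. For $\<40>$ the situation is simpler: the connected graph has no sub-divergence at all (each proper subset has degree strictly above $-|\s|$), so a single $R'$ on any edge already kills the sole overall logarithm, and your scheme works as written.

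Two smaller points: appealing to \cite[Thm.~2.15]{CH16} for the residual bounds is not the right citation --- that theorem concerns convergence of models, not estimates on individual Feynman integrals; what you need is ordinary graph power counting together with dominated convergence. And you should check integrability of $R'_j=\bar G'_j-K'_j$ at large scales (on $\R\times S^1$ this follows from the exponential time decay of $\partial_x\bar G_j$), since $R'_j$ is not compactly supported.
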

\begin{proof}
For the discrepancy between the constants $\bar{\ell}_{\eps,\<2>}[(j_{k},i_{k})_{k=1}^{2}]$ and $\ell_{\eps,\<2>}[(j_{k},i_{k})_{k=1}^{2}]$ one can show that by scale invariance of $\bar{G}_{m}$ under parabolic scaling one has 
\[
\bar{\ell}_{\eps,\<2>}[(j_{k},i_{k})_{k=1}^{2}]
=
\frac{1}{\eps}\bar{\ell}_{1,\<2>}[(j_{k},i_{k})_{k=1}^{2}]\;.
\]
Then using that the kernels $K_{j_{1} - i_{1}}$ and $K_{j_{2} - i_{2}}$ are just truncations of $\bar{G}_{j_{1} - i_{1}}, \bar{G}_{j_{2} - i_{1}}$ one can show  
\[
\ell_{\eps,\<2>}[(j_{k},i_{k})_{k=1}^{2}]
=
\frac{1}{\eps}\bar{\ell}_{\eps,\<2>}[(j_{k},i_{k})_{k=1}^{2}]
+
c
\]
for some finite constant $c$. 

One needs more detailed analysis for the discrepancy  for the symbols $\<40>$ and $\<211>$ since these in general produce logarithmic divergences. 
However, very similar analysis was done in the context of a single layer KPZ equation in \cite[Section~6.3]{KPZJeremy}. 
There one can show that appropriate rescalings of the kernels $K_{0} \ast \rho$ and $\partial_{x} K_{0} \ast \rho$ always converge to the corresponding untruncated kernels $G_{0} \ast \rho$ and $\partial_{x} G_{0} \ast \rho$ in semi-normed spaces of kernels that give good control over convolution on large and small scales - see \cite[Lemma~6.8]{KPZJeremy} - and this can be used to control the error in renormalization constants introduced by choosing the truncation $K_{0}$ of $G_{0}$ in the integrals defining $\ell_{\eps,\<40>}[(0,0),\dots,(0,0)]$ and $\ell_{\eps,\<211>}[(0,0),\dots,(0,0)]$ - see \cite[Lemma~6.11]{KPZJeremy}. 

However, it is straightforward to check that one has convergence, under the same rescaling, of the  kernels $K_{j} \ast \rho$ and $\partial_{x} K_{j} \ast \rho$ to $\bar{G}_{j} \ast \rho$ and $\partial_{x} \bar{G}_{j} \ast \rho$ and this combined with the same arguments as before gives the desired result. 
\end{proof}
\subsection{Renormalization of symbols and renormalised equation}
%

\begin{lemma}\label{lem: renormalisation of symbols}
Let $Z^{\eps} = (\Pi^{\eps},\Gamma^{\eps})$ be the canonical lift of $\zeta_{\eps}$ into a smooth model and $\hat{Z}^{\eps} \eqdef (\hat{\Pi}^{\eps},\hat{\Gamma}^{\eps})$ be the BPHZ lift of $\zeta_{\eps}$. 

Then one has
\begin{equs}
\hat{\Pi}^{\eps}_{z} \mathbf{1}
=&
\Pi^{\eps}_{z} \mathbf{1}\\
\hat{\Pi}^{\eps}_{z} \<1>[(j_{1},i_{1})]   
=& \Pi^{\eps}_{z} \<1>[(j_{1},i_{1})] \\
\hat{\Pi}^{\eps}_{z} \<2>[(j_{k},i_{k})_{k=1}^{2}]  
=& \Pi^{\eps}_{z} \<2>[(j_{k},i_{k})_{k=1}^{2}]  -
\ell_{\eps,\<2>}[[(j_{k},i_{k})_{k=1}^{2}]  \\
\hat{\Pi}^{\eps}_{z} \<40>[(j_{k},i_{k})_{k=1}^{6}]  
=& \Pi^{\eps}_{z} \<40>[(j_{k},i_{k})_{k=1}^{6}]  
 - \ell_{\eps,\<40>}[(j_{k},i_{k})_{k=1}^{6}]\\
\hat{\Pi}^{\eps}_{z} \<211>[(j_{k},i_{k})_{k=1}^{6}]  
=& \Pi^{\eps}_{z} \<211>[(j_{k},i_{k})_{k=1}^{6}]  
 - \ell_{\eps,\<211>}[(j_{k},i_{k})_{k=1}^{6}]\;,\\
\end{equs}
where all the indices above are chosen to satisfy the constraint of the corresponding symbol. 
\end{lemma}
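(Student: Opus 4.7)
The plan is to apply the explicit formula for the BPHZ model,
\begin{equ}
\hat{\Pi}^{\eps}_{z}\tau = (\Pi^{\eps}_{z} \otimes \ell^{\eps}_{\BPHZ})\Delta^{-}\tau,
\end{equ}
to each tree in turn. Here $\Delta^{-}$ is the negative coproduct from \cite{BHZ16}, which sums over subtrees of strictly negative homogeneity with the complementary tree retained, and $\ell^{\eps}_{\BPHZ}$ is the character determined recursively by requiring $\E[(\hat{\Pi}^{\eps}_{z}\tau)(z)] = 0$ for every $\tau$ of negative homogeneity.

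For $\mathbf{1}$ there is nothing to extract. For $\<1>[(j_1,i_1)]$, which is its only candidate subtree and carries a single factor of $\xi_{\eps}$, the recursion forces $\ell^{\eps}_{\BPHZ}(\<1>[(j_1,i_1)]) = -\E[\Pi^{\eps}_{z}\<1>[(j_1,i_1)](z)] = 0$, so no renormalization is needed. For $\<2>[(j_k,i_k)_{k=1}^{2}]$ the only non-trivial candidate in $\Delta^{-}$ is the full tree (the two $\mathcal{I}'[\Xi]$ leaves have vanishing character as above), and the recursion immediately identifies $\ell^{\eps}_{\BPHZ}(\<2>[(j_k,i_k)_{k=1}^{2}]) = -\ell_{\eps,\<2>}[(j_k,i_k)_{k=1}^{2}]$ from the definition of the latter.

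For the trees $\<40>$ and $\<211>$ the list of subtrees of negative homogeneity is larger: besides the full tree, $\<40>$ contains two inner $\<2>$ subtrees and two intermediate subtrees of type $\mathcal{I}'[\<2>]$, and $\<211>$ contains analogous nested structures. The key reduction is the pointwise identity $\Pi^{\eps}_{z}\mathcal{I}'[\mathbf{1}] \equiv 0$, which holds because the truncations $K_j$ are chosen symmetric in space so $\partial_x K_j$ is antisymmetric in $x$, giving $\int \partial_x K_j(y-w)\,dw = 0$ and, consequently, vanishing polynomial counter-terms. Whenever a proper divergent subtree $\sigma$ is extracted from $\<40>$ or $\<211>$, the complementary tree $\tau/\sigma$ acquires a factor of the form $\mathcal{I}'[\mathbf{1}]$, and the corresponding contribution to $(\Pi^{\eps}_{z}\otimes \ell^{\eps}_{\BPHZ})\Delta^{-}\tau$ vanishes. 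Subtrees with an odd number of noise factors further contribute zero by mean-zero of $\xi$. Only the full-tree extraction survives, and the recursion produces $\ell^{\eps}_{\BPHZ}(\tau) = -\ell_{\eps,\tau}$.

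The main obstacle is the careful enumeration of divergent subtrees, especially for $\<211>$, which is asymmetric and has several nested substructures of different types (including $\mathcal{I}'[\<2>]\cdot\mathcal{I}'[\Xi]$-type subtrees with an odd noise count). One verifies in each case that either the complementary tree produces an $\mathcal{I}'[\mathbf{1}]$ factor or that the extracted subtree has an odd number of noises. Once this case analysis is complete, assembling the displayed identities for $\hat{\Pi}^{\eps}$ reduces to reading off the recursive definition of $\ell^{\eps}_{\BPHZ}$ on the surviving full-tree term.
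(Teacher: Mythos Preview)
Your approach is essentially the same as the paper's: start from the BPHZ formula $\hat\Pi^\eps_z = (\Pi^\eps_z\otimes\ell^\eps_{\BPHZ})\Delta^-$, and argue that for $\<40>$ and $\<211>$ only the full-tree extraction survives. The paper also invokes (i) the kernel property (it phrases this as ``$K$ integrates to $0$'' via condition~(d) in Section~\ref{sec:The MKPZ Model}, whereas you use the equivalent spatial antisymmetry of $\partial_x K_j$) and (ii) the Gaussian/odd-noise observation.

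There is, however, one case your enumeration does not cover. Inside $\<211>$ one can extract a subtree of shape $\<11>$, namely $\CI'_{(j_3,i_3)}[\CI'_{(j_1,i_1)}[\Xi]]\CI'_{(j_4,i_4)}[\Xi]$ rooted at the middle vertex (homogeneity $-2\kappa$). This has an \emph{even} number of noises, it is not planted, and its complement in $\<211>$ is again of shape $\<11>$ with no $\CI'[\mathbf{1}]$ factor. So neither of your two mechanisms disposes of it. The paper handles this with a third observation you omit: the BPHZ character of any $\<11>$-type tree vanishes by parity under $(t,x)\mapsto(t,-x)$, since the mollified noise is even in space. You should add this argument.

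A smaller imprecision: for the planted subtrees $\CI'[\<2>]=\<20>$ that you list, the contribution vanishes because $\ell^\eps_{\BPHZ}(\<20>)=0$ (again from $\int\partial_xK_j=0$), not because the complement carries an $\CI'[\mathbf{1}]$ factor. Your conclusion is right, but the vanishing lives on the extracted side.
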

\begin{proof}
Starting with the generic formulation of BPHZ renormalization as a starting point, the key observations needed to justify the simple formulas above are that
\begin{itemize}
\item Additional subtractions for symbols of the type $\<40>$ or $\<211>$ due to the occurrence of $\<2>$ vanish since our truncated kernel $K$ integrates to $0$. 
\item We are in the Gaussian case, and so symbols with an odd number of noises do not produce any renormalization\footnote{In contrast to the situation in \cite{NonGaussShen}.}.
\item Symbols of the type $\<11>$ produce vanishing renormalization constants, this is by parity under the reflection $(t,x) \mapsto (t,-x)$ (since we imposed that our mollifier is even in space $\zeta_{\eps}$ is invariant in law under this reflection). 
\end{itemize}
\end{proof}
%
%
The next lemma presents the renormalized equation obtained by solving the equation driven by $\hat{Z}^{\eps}$.

\begin{lemma}\label{lem: BPHZ solution}
Let $\hat{Z}^{\eps}$ be the BPHZ lift of $\zeta_{\eps}$. Let $\gamma > 3/2 + \kappa$, $\eta > 0$, and $\vec{H}_{\eps}$ be the solution to the abstract fixed point problem \eqref{eq:abstractfp} in the $\CD^{\gamma,\eta}([0,T] \times S^{1})$ space over $\hat{Z}^{\eps}$. 
Let $\vec{u}_{\eps} = (u_{\eps,l})_{l=1}^{N}$ be the reconstruction of $\vec{H}_{\eps}$. 

Then $\vec{u}_{\eps}$ solves, on $[0,T] \times S^{1}$, the initial value problem
\begin{equ} \label{eq:MKPZ BPHZ}
\partial_t u_{\varepsilon,l} = \partial_x^2 u_{\varepsilon,l} + (\partial_x u_{\varepsilon,l})^2 + \xi_{\varepsilon}-\sum_{q=1}^{3} \tilde{C}^{(q)}_{\varepsilon,l} + \sum_{j=1}^{l-1} \partial_x^2 u_{\varepsilon,j}
\end{equ}
starting from initial condition $(h_1^{(0)},\cdots,h_N^{(0)})$, where 
\begin{equs}\label{eq: BPHZ constant}
\tilde{C}^{(1)}_{\varepsilon,l}
\eqdef&\ 
\sum_{
(j_{k},i_{k})_{k=1}^{2}
\atop
j_{1} = j_{2} = l}
\ell_{\eps,\<2>}[[(j_{k},i_{k})_{k=1}^{2}]\\
\tilde{C}^{(2)}_{\varepsilon,l}
\eqdef&\ 
\sum_{(j_{k},i_{k})_{k=1}^{6}
\atop
j_{5} = j_{6} = l}
\ell_{\eps,\<40>}[[(j_{k},i_{k})_{k=1}^{6}]\\
\tilde{C}^{(3)}_{\varepsilon,l}
\eqdef&\ 
4
\sum_{(j_{k},i_{k})_{k=1}^{6}
\atop
j_{5} = j_{6} = l}
\ell_{\eps,\<211>}[[(j_{k},i_{k})_{k=1}^{6}]\;.
\end{equs}
Moreover, the $(u_{1,\eps},\dots,u_{N,\eps})$ converge in probability to $N$ random fields $(u_1,\cdots,u_N)\in\CC^\eta([0,T],S^1)^{\otimes N}$
for any $\eta\in(0,\beta\wedge \frac12)$. 
\end{lemma}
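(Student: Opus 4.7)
The plan is to derive the PDE satisfied by $\vec u_\eps$ by applying the reconstruction operator to the abstract fixed point problem \eqref{eq:abstractfp} solved over $\hat Z^\eps$, and then to pass to the limit by combining the convergence of the BPHZ model with the continuity of the solution map. Since $\hat Z^\eps$ is a smooth admissible model, the reconstruction coincides with pointwise evaluation of $\hat\Pi^\eps$, and the abstract integration operators $\CP_{j,i}$ intertwine convolution against $\bar G_{j-i}$ up to a smooth remainder absorbed into the harmonic part encoded by the initial data. Applying the reconstruction to \eqref{eq:abstractfp} therefore produces the mild identity
\begin{equ}
u_{\eps,l} = \sum_{i=1}^l \bar G_{l-i} \ast \bigl(\CR(\DD H_{\eps,i})^2 + \xi_\eps\bigr) + G \ast h_l^{(0)},
\end{equ}
and by Lemma~\ref{lem:mild} the PDE~\eqref{eq:MKPZ BPHZ} is equivalent to this identity with $\CR(\DD H_{\eps,i})^2$ replaced by $(\partial_x u_{\eps,i})^2 - \sum_{q=1}^{3} \tilde C^{(q)}_{\eps,i}$. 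The task therefore reduces to establishing, pointwise in $z$, the identity
\begin{equ}
\CR(\DD H_{\eps,l})^2(z) = (\partial_x u_{\eps,l})^2(z) - \sum_{q=1}^3 \tilde C^{(q)}_{\eps,l}.
\end{equ}

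To verify this identity I would expand $(\DD H_{\eps,l})^2$ using the explicit shape of $H_{\eps,l}$. The abstract derivative $\DD$ sends $\<d>\mapsto\<1>$, $\<2d>\mapsto\<20>$, $\<21d>\mapsto\<210>$, $\<1d>\mapsto\<11>$ and $H_{\eps,l,\mathbf X_1}\mathbf X_1$ to the polynomial factor $H_{\eps,l,\mathbf X_1}\mathbf 1$; squaring produces a finite sum of products of these elementary symbols with polynomial coefficients. Applying $\hat\Pi^\eps_z$ to each product and comparing with the canonical model via Lemma~\ref{lem: renormalisation of symbols}, the only subsymbols of negative homogeneity with nonzero BPHZ constants are $\<2>$, $\<40>$ and $\<211>$: all other candidates either carry an odd number of noises, or are of $\<11>$-type and hence vanish by spatial parity of the even mollifier. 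The combinatorial counts then match: $(\sum_{j_1=l}\<1>[(j_1,i_1)])^2$ produces $\sum_{j_1=j_2=l}\<2>[(j_k,i_k)_{k=1}^2]$, giving $\tilde C^{(1)}_{\eps,l}$; $(\sum_{j_3=l}\<20>[(j_k,i_k)_{k=1}^3])^2$ produces $\sum_{j_5=j_6=l}\<40>[(j_k,i_k)_{k=1}^6]$, giving $\tilde C^{(2)}_{\eps,l}$; and the cross-term between $\sum_{j_1=l}\<1>$ and $2\sum_{j_5=l}\<210>$ produces $2\cdot 2 = 4$ copies of $\sum_{j_5=j_6=l}\<211>[(j_k,i_k)_{k=1}^6]$, where one factor of $2$ stems from the cross-term symmetry of the square and the other from the coefficient of $\<21d>$ in the shape of $H_{\eps,l}$, giving $\tilde C^{(3)}_{\eps,l}$.

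For convergence, the Proposition of Section~\ref{sec:The MKPZ Model} gives $\hat Z^{\xi\ast\rho_\eps}_\BPHZ \to Z_\BPHZ^\xi$ in probability in $\mathscr M_0(\mathscr T)$. Combining this with the joint continuity in the model of the layer-by-layer Picard iteration built in Lemma~\ref{lem:abs-fp-sol} (via \cite[Theorem 7.8]{Regularity} together with the Schauder estimate for the operators $\CP_{j,i}$ used in that proof) and with the joint continuity of the reconstruction operator, one deduces that each $u_{\eps,l}$ converges in probability in $\CC^\eta([0,T],S^1)$ for any $\eta\in(0,\beta\wedge\tfrac12)$, on a possibly shrinking but positive random time interval $T$. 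The main obstacle I expect is the bookkeeping in the second paragraph above: enumerating every product that appears in the expansion of $(\DD H_{\eps,l})^2$, checking that no subtree other than $\<2>$, $\<40>$ and $\<211>$ receives a divergent counterterm, and carefully tracking the multiplicities that produce the constants $\tilde C^{(1)}_{\eps,l}$, $\tilde C^{(2)}_{\eps,l}$ and $\tilde C^{(3)}_{\eps,l}$ with the correct constraints $j_a = j_b = l$ on the outermost edges.
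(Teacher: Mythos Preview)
Your proposal is correct and follows essentially the same route as the paper: apply reconstruction to the abstract fixed point problem, use Lemma~\ref{lem: renormalisation of symbols} to identify $\hat{\CR}^\eps(\DD H_{\eps,l})^2$ with $(\partial_x u_{\eps,l})^2 - \sum_q \tilde C^{(q)}_{\eps,l}$, and then deduce convergence from model convergence plus continuity of the solution and reconstruction maps. One small slip: the abstract derivative sends $\<1d>\mapsto\<10>$, not $\<11>$; the symbol $\<11>$ only appears afterwards as the product $\<1>\cdot\<10>$ in the expansion of the square, which is where the parity argument applies.
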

\begin{proof}
Writing $\hat{\mathcal{R}}^{\eps}$ for the reconstruction operator associated to $\hat{Z}^{\eps}$, applying $\hat{\mathcal{R}}^{\eps}$ to both sides of \eqref{eq:abstractfp} gives
\begin{equ}\label{eq: reconstructed mildMKPZ}
u_{\varepsilon,j} = 
\sum_{i = 1}^{j}
\overline{G}_{j-i} \ast \left[ 
\hat{\mathcal{R}}^{\eps}[(\DD H_{i})^{2}]
+
\xi_{\varepsilon}
+h_{i}^{(0)}
\right] 
 \quad \textnormal{for }1 \le j \le N.
\end{equ}
Now, using the identity $(\hat{\mathcal{R}}^{\eps}F)(z) = (\hat{\Pi}_{z}F(z))(z)$, one can verify, using Lemma~\ref{lem: renormalisation of symbols}, 
\[
(\hat{\mathcal{R}}^{\eps}(\DD H_{i})^{2})(z) =
(\partial_{x} u_{\eps,i})^{2}(z) -  \sum_{q=1}^{3} \tilde{C}^{(q)}_{\eps,i}\;.
\]
Then after inserting the above into \eqref{eq: reconstructed mildMKPZ} one can rewrite the resulting system in the form \eqref{eq:MKPZ BPHZ} (just as \eqref{mildMKPZ} was a rewriting itself of the non-renormalized equation). 

The statement of convergence as $\eps \downarrow 0$ follows from the convergence of the models $\hat{Z}^{\eps}$, the continuity of the abstract solution map for \eqref{eq:abstractfp}, and the continuity of reconstruction.
\end{proof}
The above proposition gives a notion of local solution to the multilayer KPZ equation but it is not completely canonical since it depends on a truncation of the heat kernel and it will not give the Hopf-Cole solution for the first layer so it differs from the solution described in our main theorem but only by a finite shift.
\begin{proof}[Theorem~\ref{thm}]
For $i \in [N]$, $q=1,2,3$, we choose $C_{\e,i}^{(q)}$ to be defined as $\tilde{C}_{\e,i}^{(q)}$ but with $\ell_{\bullet}[\bullet]$ replaced by $\bar{\ell}_{\bullet}[\bullet]$. 
Convergence of the $\vec{h}_{\eps}$ then follows from that of the $\vec{u}_{\eps}$ of \eqref{lem: BPHZ solution} since one has
\[
h_{\eps,i}(t,x) - u_{\eps,i}(t,x)
=
t \sum_{q=1}^{3} (\tilde{C}_{\e,i}^{(q)} - C_{\e,i}^{(q)})
\] 
and the right hand side above converges to a finite limit by Lemma~\ref{lem: hopf cole vs bphz constants}.

Identification of $h_{1}$ with the corresponding Hopf-Cole solution can be performed just as in the proof of \cite[Proposition~7.1]{KPZJeremy} in the simpler setting where one just takes $F(u) = u^{2}$ there.  

The solution $h$ constructed there is obtained by taking renormalized solutions 
where $\sum_{q=1}^{3} \tilde{C}_{\e,1}^{(q)}$ is replaced by $C_{\e,1}^{(1)} + \tilde{C}_{\e,i}^{(2)} + \tilde{C}_{\e,i}^{(3)}$, then the constant $c$ referenced in \cite[Proposition~7.1]{KPZJeremy} corresponds precisely to $(\tilde{C}_{\e,i}^{(2)} + \tilde{C}_{\e,i}^{(3)}) - (C_{\e,i}^{(2)} + C_{\e,i}^{(3)})$. 
\end{proof}
We now proceed to obtain the promised combinatorial formula for the renormalization constants appearing in Theorem~\ref{thm}. 

\begin{proposition}
\label{prop:reneq}
$C_{\e,i}^{(1)}$ is given by \eqref{eq:1overeps} below,
$C_{\e,i}^{(2)} $ is given by \eqref{e:C2-formula}
where $ \mathcal G_{m_3,m_5,k_1,k_2}^{(\e)}$ therein
is given by \eqref{e:TallTreeCase1}--\eqref{e:TallTreeCase3B},
and $C_{\e,i}^{(3)} $ is given by \eqref{e:C3-formula}
where $\bar{\mathcal G}_{m_5;k_1,k_2;m_6}^{(\e)} $
 therein
is given by \eqref{e:WideTree}.
\end{proposition}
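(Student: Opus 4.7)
The plan is to expand each of the three renormalization constants into an explicit deterministic integral, perform the combinatorial summation dictated by the tree-index constraints of Section~\ref{sec: graphical notation}, and then identify the resulting expressions with the quantities $\mathcal G^{(\varepsilon)}$ and $\bar{\mathcal G}^{(\varepsilon)}$.

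First I would apply Wick's theorem to each of $\bar{\ell}_{\varepsilon,\<2>}$, $\bar{\ell}_{\varepsilon,\<40>}$ and $\bar{\ell}_{\varepsilon,\<211>}$. Since $\xi_\varepsilon$ is Gaussian, all contractions are pairings of the noise factors against the covariance $\rho_\varepsilon * \rho_\varepsilon$, so each $\bar{\ell}_{\varepsilon,\tau}$ becomes a deterministic convolution integral of products of the kernels $\bar{G}'_{j_k-i_k}$. For $\bar{\ell}_{\varepsilon,\<2>}$ there is only one pairing; for $\bar{\ell}_{\varepsilon,\<40>}$ and $\bar{\ell}_{\varepsilon,\<211>}$ Wick's theorem produces a small number of pairings, one of which matches the cumulant graph dictated by the tree while the others are either absorbed into lower-order BPHZ subtractions or are handled separately.

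Second, I would insert the binomial expansion $\bar{G}_m = \sum_{i=1}^m \binom{m-1}{i-1} G_i$ from \eqref{e:defBbar}, and change variables from the layer-indices $(j_k,i_k)$ to the edge-height indices $m_k := j_k - i_k$. The constraints coming from the rule $R$ (for example, for $\<40>$ one has $j_1 = j_2 = i_5$, $j_3 = j_4 = i_6$, $j_5 = j_6 = l$; and for $\<211>$ one has $j_1 = j_2 = i_3$, $j_3 = j_4 = i_5$, $j_5 = j_6 = l$) are triangular, so one can solve them layer-by-layer and collapse the constrained sums over $(j_k,i_k)$ into sums over $(m_k)$ weighted by products of binomial coefficients. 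These binomial sums can then be simplified using the Vandermonde and telescoping identities already exploited in Lemma~\ref{lem:mild}, leaving a compact expression for each $\tilde C_{\varepsilon,l}^{(q)}$ as a sum over the reduced indices $(m_k)$ of an integral kernel. By Lemma~\ref{lem: hopf cole vs bphz constants} one may, modulo finite shifts already accounted for in the statement of Theorem~\ref{thm}, do all of this directly with $\bar{G}_m$ rather than with the truncation $K_m$.

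Third, I would read off the singular behavior. For $\bar{\ell}_{\varepsilon,\<2>}$, parabolic rescaling of $\bar{G}'$ gives the exact $\varepsilon^{-1}$ divergence, and the layer sum produces the explicit purely $\varepsilon^{-1}$ coefficient claimed for $C^{(1)}_{\varepsilon,i}$. For $\bar{\ell}_{\varepsilon,\<40>}$ and $\bar{\ell}_{\varepsilon,\<211>}$ the situation is more delicate: the short-distance behavior of the integral depends on whether the ``tall'' indices $m_5, m_3$ (and $m_6$) vanish or not. When they vanish, the integrand reduces to the single-layer KPZ diagram of \cite{KPZJeremy} and produces a $\log\varepsilon$; when they are strictly positive, the additional convolutions with $\partial_x^2 G$ improve regularity and the integral remains bounded as $\varepsilon \downarrow 0$. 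This case split is precisely what is encoded by the three sub-cases defining $\mathcal G^{(\varepsilon)}_{m_3,m_5,k_1,k_2}$ and by the single expression $\bar{\mathcal G}^{(\varepsilon)}_{m_5;k_1,k_2;m_6}$, and identifying the integrals produced by the Wick expansion with these quantities gives the claimed formulae.

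The main obstacle is the bookkeeping of the combinatorial sum: the triangular layer constraints interact with the binomial weights coming from \eqref{e:defBbar}, and the identification must be carried out case by case, making sure in particular that each sub-case of $\mathcal G^{(\varepsilon)}$ corresponds to exactly the right subset of the index tuples $(j_k,i_k)_{k=1}^{6}$. The analytic part, namely the $\varepsilon^{-1}$ and $\log\varepsilon$ estimates, is essentially a parameterized version of the single-layer KPZ analysis already carried out in \cite[Section~6.3]{KPZJeremy} and may be imported with minor modifications; the new content is purely combinatorial.
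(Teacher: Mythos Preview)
Your overall plan---Wick contractions, then the binomial expansion \eqref{e:defBbar}, then resummation over the layer indices---matches the paper's approach for the combinatorial first half of the argument. But the second half of your sketch is wrong in a way that would block the derivation of \eqref{e:TallTreeCase1}--\eqref{e:TallTreeCase3B} and \eqref{e:WideTree}.

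First, you are missing the main analytic input. After Wick contraction the integrals contain pairs of edges of the form $G'_{m_1}\ast\wtd{G'_{m_2}}$, and the paper does \emph{not} estimate these by power counting; it computes them exactly using the heat-kernel identity $2G'\ast\wtd{G'}=G+\wtd G$ and the resulting recursion for $D_{i,j}\eqdef G'_i\ast\wtd{G'_j}$ (Appendix~\ref{sec:identity}, in particular \eqref{e:Dij-formula}). This identity is what replaces each contraction by a linear combination of single kernels $G_k$ (the reflected $\wtd{G_k}$ terms yield only finite contributions because they produce directed loops in the graph). Without this step you never get the indices $k_1,k_2$ that appear in $\mathcal G^{(\e)}_{m_3,m_5,k_1,k_2}$ and $\bar{\mathcal G}^{(\e)}_{m_5;k_1,k_2;m_6}$, nor the displayed binomial factors in \eqref{e:C2-formula} and \eqref{e:C3-formula}.

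Second, your description of the case split is incorrect. The three cases \eqref{e:TallTreeCase1}--\eqref{e:TallTreeCase3B} do \emph{not} distinguish ``indices vanish'' versus ``indices positive'', and higher indices do not make the integrals bounded: every $G_m$ is still of order $-1$ (Proposition~\ref{Prop:G}), so \emph{all} these graphs diverge like $\log\e$. The split is according to the sign of $m_3-k_2$, because the two parallel edges in the tall-tree graph carry $G^{(2m_3+1)}$ and $G^{(2k_2)}$; only when their orders of differentiation differ by exactly one can their product be written as $\tfrac12\partial_x((G^{(\cdot)})^2)$. In the other cases one integrates by parts repeatedly (Section~5.2, Cases~2 and~3) to reduce to that situation, and then uses $G_n=\tfrac{t^n}{n!}G^{(2n)}$ together with the Hermite representation \eqref{e:Gder-to-HG} to reach the explicit integrals of $H_{a}H_{b}H_{c}\,G^3$.
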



Note that although formulae for $ C_{\e,i}^{(2)}$ and $C_{\e,i}^{(3)}$
are very sophisticated,
they are all written in terms of integrals of Hermite polynomials and heat kernels (with many combinatoric coefficients).

We consider the constant  $C_{\e,i}^{(1)}$
in this section, and will study $ C_{\e,i}^{(2)}$ and $C_{\e,i}^{(3)}$
in the next section.

Consider $C_{\e,i}^{(1)}$.
Using the definition \eqref{e:defBbar}
 one has
\[
\bar\ell_{\eps,\<2>}[(i,j),(i,k)]
 =\sum_{m_1=0}^{i-j} \sum_{m_2=0}^{i-k} 
 \binom{i-j-1}{m_1-1}  \binom{i-k-1}{m_2-1} 
 G'_{m_1,\eps} * \wtd{G'}_{m_2,\eps} (0)
\]
where tilde denotes the reflection, i.e. 
$\wtd F(z):=F(-z)$.
In view of \eqref{eq: BPHZ constant} we need
to sum over $j$ and $k$. To this end
we perform a re-summing, using 
\begin{equs}
\sum_{j= 1}^{i}
\sum_{m=0}^{i-j} \binom{i-j-1}{m-1} H_m 
=  \sum_{m=0}^{i-1}   \sum_{j=1}^{i-m} 
	\binom{i-j-1}{m-1} H_m  
=   \sum_{m=0}^{i-1}  \binom{i-1}{m} H_m 
\end{equs}
where $H_m$ is a generic quantity which depends on $m$ but not $j$.
We get
\begin{equ}[e:1st-formula-Wick]
C_{\e,i}^{(1)}  
= \sum_{m_1,m_2=0}^{i-1}  \binom{i-1}{m_1}\binom{i-1}{m_2} 
G'_{m_1,\eps} * \wtd{G'}_{m_2,\eps} (0)
\end{equ}
Note that here we have used our convention for binomial constants with entries allowed to be $-1$.

\begin{lemma}\label{lem:1overeps}
Fix $i\in\{1,2,\ldots,N\}$. For each $k\in\{0,1,\ldots,i-1\}$,
with constant 
\[
C(\eps,k) =\frac{1}{k!} \big(t^k G^{(2k)} \big) *\rho_\eps^{(2)}(0) \simeq 1/\eps
\]
 one has
\begin{equation}  \label{eq:1overeps}
C_{\e,i}^{(1)}=\sum_{k=0}^{i-1}(-2)^k T(i-1,k)C(\eps,k),
\end{equation}
where for each $i$ and each $k$,
\begin{equation}
T(i,k)= \sum_{m_2=0}^{i}\sum_{m_1=k}^{i} (-2)^{-(m_1+m_2)}\binom{i}{m_1}\binom{i}{m_2}\binom{m_1-k+m_2}{m_2}.
\end{equation}
\end{lemma}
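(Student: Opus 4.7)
The plan is to compute $G'_{m_1,\eps}*\widetilde{G'_{m_2,\eps}}(0)$ explicitly in Fourier space, apply a partial-fraction decomposition to express the integrand in terms of the $C(\eps,k)$'s, and then sum against the binomial weights in \eqref{e:1st-formula-Wick}.

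First I would establish the explicit formula $G_m = (t^m/m!)\,G^{(2m)}$ on $\R\times S^1$ by induction: since $G_m = G*(\partial_x^2 G)^{*m} = \partial_x^{2m}G^{*(m+1)}$ and the heat semigroup property gives $G^{*(m+1)} = (t^m/m!)\,G$, this is immediate. Consequently $G'_m = (t^m/m!)\,G^{(2m+1)}$ and $\widehat{G_m}(\tau,\kappa) = (-\kappa^2)^m/(i\tau+\kappa^2)^{m+1}$ (with $\kappa\in 2\pi\Z$ on the torus). Writing $\rho_\eps^{(2)} := \rho_\eps * \widetilde{\rho_\eps}$ and applying Plancherel then yields the two formulas
\[
C(\eps,k) \;=\; \int \frac{(-\kappa^2)^k\,|\widehat{\rho_\eps}|^2}{(i\tau+\kappa^2)^{k+1}}\,\tfrac{d\tau\,d\kappa}{(2\pi)^2} \;=\; \int \frac{(-\kappa^2)^k\,|\widehat{\rho_\eps}|^2}{(-i\tau+\kappa^2)^{k+1}}\,\tfrac{d\tau\,d\kappa}{(2\pi)^2}
\]
(the second equality by the symmetry $\tau\mapsto -\tau$), together with
\[
G'_{m_1,\eps}*\widetilde{G'_{m_2,\eps}}(0) \;=\; \int \frac{\kappa^2\,(-\kappa^2)^{m_1+m_2}\,|\widehat{\rho_\eps}|^2}{(i\tau+\kappa^2)^{m_1+1}(-i\tau+\kappa^2)^{m_2+1}}\,\tfrac{d\tau\,d\kappa}{(2\pi)^2}\,.
\]

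The main step is a partial-fraction decomposition: Taylor-expanding each factor at the pole of the other gives
\[
\frac{1}{(i\tau+\kappa^2)^{m_1+1}(-i\tau+\kappa^2)^{m_2+1}} \;=\; \sum_{p=0}^{m_1}\frac{\binom{m_1+m_2-p}{m_2}\,(2\kappa^2)^{-(m_1+m_2+1-p)}}{(i\tau+\kappa^2)^{p+1}} \;+\; \sum_{q=0}^{m_2}\frac{\binom{m_1+m_2-q}{m_1}\,(2\kappa^2)^{-(m_1+m_2+1-q)}}{(-i\tau+\kappa^2)^{q+1}}\,.
\]
Multiplying by $\kappa^2(-\kappa^2)^{m_1+m_2}$ reduces the net power of $\kappa^2$ in each term: since $\kappa^2(-\kappa^2)^{m_1+m_2}/(2\kappa^2)^{m_1+m_2+1-p} = (-1)^{m_1+m_2+p}(-\kappa^2)^p/2^{m_1+m_2+1-p}$, integrating the $p$-th term of the first sum against $|\widehat{\rho_\eps}|^2\,d\tau\,d\kappa/(2\pi)^2$ produces $(-1)^{m_1+m_2+p}\binom{m_1+m_2-p}{m_2}\,2^{\,p-m_1-m_2-1}\,C(\eps,p)$, and the $q$-th term of the second sum produces the analogous expression with $\binom{m_1+m_2-q}{m_1}$ in place of $\binom{m_1+m_2-p}{m_2}$.

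Finally, plugging back into \eqref{e:1st-formula-Wick}, the two partial-fraction contributions are exchanged under $m_1\leftrightarrow m_2$ in the outer binomial weight $\binom{i-1}{m_1}\binom{i-1}{m_2}$, hence together give twice a single sum; the factor $2$ cancels the leading $1/2$ in $2^{\,p-m_1-m_2-1}$, and the identity $(-1)^{m_1+m_2+k}\,2^{k-m_1-m_2} = (-2)^k\,(-2)^{-(m_1+m_2)}$ rewrites the coefficient of $C(\eps,k)$ as
\[
(-2)^k \sum_{m_2=0}^{i-1}\sum_{m_1=k}^{i-1}(-2)^{-(m_1+m_2)}\binom{i-1}{m_1}\binom{i-1}{m_2}\binom{m_1-k+m_2}{m_2} \;=\; (-2)^k\,T(i-1,k)\,,
\]
as claimed. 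The scaling $C(\eps,k)\simeq 1/\eps$ follows from the fact that $t^kG^{(2k)}$ has the same parabolic homogeneity as $G$ (degree $-1$), so its convolution with a mollifier of parabolic bandwidth $\eps$ diverges like $\eps^{-1}$. The main obstacle I anticipate is the careful bookkeeping of signs and powers of $2$ through the partial-fraction manipulation; once the identity is set up, matching with the combinatorial expression $T(i-1,k)$ is essentially a relabeling.
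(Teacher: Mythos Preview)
Your argument is correct and reaches the same intermediate identity as the paper, namely that $G'_{m_1,\eps}*\widetilde{G'_{m_2,\eps}}(0)$ expands as a linear combination of the $C(\eps,k)$'s with coefficients $-(-2)^{-(m_1+m_2-k+1)}\binom{m_1+m_2-k}{m_2}$ (plus the $m_1\leftrightarrow m_2$ symmetric term), after which the resummation into $(-2)^k T(i-1,k)$ is identical in both proofs. The route, however, is genuinely different. The paper works entirely in real space: it introduces $D_{i,j}=G'_i*\widetilde{G'_j}$, derives a two-parameter recursion from the heat-kernel identity $2G'*\widetilde{G'}=G+\widetilde{G}$, and solves that recursion via a lattice-path counting argument to obtain the closed formula \eqref{e:Dij-formula}. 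You instead pass to Fourier variables, where the same identity becomes a partial-fraction decomposition of $(i\tau+\kappa^2)^{-(m_1+1)}(-i\tau+\kappa^2)^{-(m_2+1)}$; the binomial coefficients $\binom{m_1+m_2-p}{m_2}$ appear from the Taylor expansion of one pole at the location of the other, which is the Fourier-side manifestation of the lattice-path count. Your approach is more self-contained for this lemma and avoids the appendix machinery; the paper's approach has the advantage that the real-space formula for $D_{i,j}$ is reused verbatim in Section~5 when contracting pairs of edges in the graphs $\<211>$ and $\<40>$, where a Fourier treatment would be less convenient because of the remaining uncontracted edges.
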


\begin{proof}
	Let
	 tilde denote the reflection, i.e. 
$\wtd F(z):=F(-z)$. Recall \eqref{e:1st-formula-Wick}.
We claim that
\begin{equs}\label{eq:1ststepC1}
G'_{m_1,\eps} & * \wtd{G'}_{m_2,\eps} (0)
\\
&=
-\sum_{k=0}^{m_1}(-2)^{-(m_1+m_2-k+1)} \binom{m_1+m_2-k}{m_2}
	(\wtd{G_k}*\rho_\eps^{(2)})(0)
\\
&-\sum_{k=0}^{m_2}(-2)^{-(m_1+m_2-k+1)}\binom{m_1+m_2-k}{m_1}(G_k*\rho_\eps^{(2)})(0),
\end{equs}
where $\rho_\eps^{(2)}=\rho_\eps*\rho_\eps$. 
Indeed, let $m_1,m_2\geq 0$, we  define
\begin{equation}
D_{m_1,m_2}^{(\e)}= G'_{m_1,\eps}*\wtd{G'}_{m_2,\eps},
\end{equation}
and by \eqref{e:def-Dij}
we see that $D_{m_1,m_2}^{(\e)} = D_{m_1,m_2} * \rho_\eps^{(2)}$.
Using \eqref{e:Dij-formula} we conclude the proof of~\eqref{eq:1ststepC1}.
%
%

We define
\begin{equation} \label{e:def-Cek}
C(\eps,k)\eqdef (G_k*\rho_\eps^{(2)})(0) = (\wtd{G_k}*\rho_\eps^{(2)})(0)
\end{equation}
where the last equality is a consequence of the fact that $\rho$ is an even function.
Using the form of $C_i^{(1)}$ stated in Proposition~\ref{prop:reneq}
\begin{equation}
C_{i}^{(1)}= \sum_{m_1,m_2=0}^{i-1}\binom{i-1}{m_1}\binom{i-1}{m_2}\ell_{1,\t},
\end{equation} 
together with \eqref{eq:1ststepC1}, we have 
\begin{equ}
C_i^{(1)}=\sum_{k=0}^{i-1}(-2)^k T(i-1,k) C(\eps,k),
\end{equ}
where for each $i$ and each $k$,
\begin{equation}
T(i,k)= \sum_{m_2=0}^{i}\sum_{m_1=k}^{i} (-2)^{-(m_1+m_2)}\binom{i}{m_1}\binom{i}{m_2}\binom{m_1-k+m_2}{m_2}.
\end{equation}
Here we have performed a resummation:
we  fix the $k$ in \eqref{eq:1ststepC1}, 
and sum $m_1$ from $k$ to $i-1$
(or sum $m_2$ from $k$ to $i-1$, which yields
the same result by \eqref{e:def-Cek} and invariance of swapping $m_1$ and $m_2$ - thus we get a factor $2$ which cancels a factor $-(-2)^{-1}$).

Moreover, integrating for each of the convolutions that follow first over space, making use of the semi-group property, and then integrating over time we see that
\begin{equation}
G^{*n}= \frac{t^{n-1}}{(n-1)!} G.
\end{equation}
Hence, we can then write 
\[
C(\eps,k)= (G_k*\rho_\eps^{(2)})(0)
 =\frac{1}{k!} \Big(t^k G^{(2k)} \Big) *\rho_\eps^{(2)}(0)\;.
\]
It now follows immediately that $C(\eps,k)\sim \eps^{-1}$.
\end{proof}

\begin{remark}
One could also derive an alternative formula for $C(\eps,k)$. Let
\begin{equation}
H_n(x)=(-1)^n e^{x^2/2}\partial_x^n e^{-x^2/2}
=: (-1)^n e^{x^2/2}\partial_x^n \tilde H(x)
\end{equation}
be the $n$-the Hermite polynomial where $ \tilde H(x) = e^{-x^2/2}$.
It then follows that
\begin{equation}
\partial_x^n G(x,t) =(-1)^n e^{x^2/4t}\partial_x^n \tilde H(x/\sqrt{2t})(-1)^n G(x,t)= \frac{1}{(2t)^{n/2}} (-1)^n H_n(x/\sqrt{2t})G(x,t).
\end{equation}
Hence, we can  write 
$C(\eps,k) 
= \frac{t^k}{k!}  \frac{1}{(2t)^k} F_{k} G *\rho_\eps^{(2)} (0)
= \frac{1}{2^k k!} (F_{k} G *\rho_\eps^{(2)})(0)$
where  $F_k(x,t)=H_{2k}(x/\sqrt{2t})$. 
\end{remark}

\section{Logarithmic renormalization}

We derive formulae for 
the other two constants in Proposition~\ref{prop:reneq} and
thus complete the proof of this Proposition.

\subsection{Combinatoric coefficients, and doing contractions}

Now consider the $n$-th equation. 
Using \eqref{e:defBbar} for $\bar G$, 
the first tree in the description of $h_n$, which we denote by $h_{n}^{(0)} $, is equal to
\begin{equs}[e:hn0]
 \sum_{i=1}^{n} \bar G_{n-i} \Xi
&= \sum_{i=1}^{n} \sum_{\ell=0}^{n-i} \binom{n-i-1}{\ell-1} 
	  G_{\ell} \xi 
=\sum_{\ell=0}^{n-1}   \sum_{i=1}^{n-\ell} 
	\binom{n-i-1}{\ell-1} \<1>[\ell] 
	\\
&= \sum_{\ell=0}^{n-1}  \binom{n-1}{\ell}   \<1>[\ell] 
\end{equs}
Note that $\sum_{\ell=0}^{n-1}$ can be rewritten as $\sum_{\ell\ge 0}$ since for $\ell$ out of that range the binomial coefficients are understood (with usual convention) as $0$.
%
The next tree in the description of $h_n$, denoted by $h_{n}^{(1)}$, is equal to
\begin{equs}[e:hn1]
 \sum_{i=1}^{n}& \bar G_{n-i} (\partial_x h_{i}^{(0)})^2 
= \sum_{i=1}^{n} \sum_{\ell=0}^{n-i} \binom{n-i-1}{\ell-1} 
	 G_{\ell}  \sum_{m_1,m_2=0}^{i-1}  \binom{i-1}{m_1}   \binom{i-1}{m_2}  \<2>[m_1,m_2]
	  \\
&= \sum_{\ell,m_1,m_2=0}^{n-1} 
	\Bigg\{\sum_{i=m_1 \vee m_2+1}^{n-\ell} 
	\binom{n-i-1}{\ell-1} \binom{i-1}{m_1}   \binom{i-1}{m_2}\Bigg\}
	\<20>[m_1,m_2,\ell] \\
\end{equs}
where we have performed resummation, 
\footnote{The coefficient in the parenthesis $\{\cdots\}$ 
automatically restricts the admissible labels for trees $\<20>[m_1,m_2,\ell]$.
For example, $n=2$ (2nd layer), if $\ell=1$ then $m_1,m_2$ must both be $0$, otherwise this coefficient is zero. 
} 
and as above  $\sum_{i=m_1 \vee m_2+1}^{n- \ell} $ could be  replaced by 
$\sum_{i\ge 0}$ since for $i$ out of that range the binomial coefficients are understood as $0$.


Turning to 
the next tree in the description of $h_n$, by \eqref{e:hn0} and \eqref{e:hn1}
\begin{equs}
 h_{n}^{(2)} &= 2 \sum_{i=1}^{n} \bar G_{n-i} (\partial_x h_{i}^{(0)} \partial_x h_{i}^{(1)}) \\
&= 2 \sum_{i=1}^{n} \sum_{\ell=0}^{n-i} \binom{n-i-1}{\ell-1}  G_{\ell} \!\!\!\!\sum_{m_1,m_2,m_3,m_4=0}^{i-1} 
	  	\sum_{j=m_1 \vee m_2+1}^{i-m_3} 
	    \\
	   &\qquad\qquad \binom{j-1}{m_1}   \binom{j-1}{m_2} \binom{i-1}{m_4} 
		 \binom{i- j -1}{ m_3-1}
	 \<21>[m_1,m_2,m_3,m_4] \\
&=2  \!\!\!\!\sum_{\ell,m_1,m_2,m_3,m_4=0}^{n-1} 
	c_{m_1,m_2,m_3,m_4,\ell}^{\<210>}
	\<210>[m_1,m_2,m_3,m_4,\ell] 
\end{equs}
where  we have performed resummation  and
\begin{equs}
{}&c_{m_1,m_2,m_3,m_4,\ell}^{\<210>} \eqdef    \\
{} & \sum_{i=\max(m_1,m_2,m_3,m_4)+1}^{n-\ell}  
		 \sum_{j\ge 0} 
	\binom{n-i-1}{\ell-1} \binom{j-1}{m_1}   \binom{j-1}{m_2}  \binom{i-1}{m_4} 
		 \binom{i- j -1}{ m_3-1} 
\end{equs}

So in the expansion of the right hand of the $n$-th layer equation, we have the following term
\begin{equs}[e:RHS211]
4 \sum_{m_1,\cdots,m_6=0}^{n-1} 
c_{m_1,m_2,m_3,m_4,m_5,m_6}^{\<211>}
\<211>[m_1,m_2,m_3,m_4,m_5,m_6]
\end{equs}
where
\begin{equs}
c_{m_1,m_2,m_3,m_4,m_5,m_6}^{\<211>}\eqdef
\sum_{i\in I}
		 \sum_{j\ge 0} 
	\binom{n-i-1}{m_5-1} \binom{j-1}{m_1}   \binom{j-1}{m_2}  \binom{i-1}{m_4} 
		 \binom{i- j -1}{ m_3-1} \binom{n-1}{ m_6} 
\end{equs}
where $I\eqdef [\max(m_1,m_2,m_3,m_4)+1,n-m_5]\cap \Z$.
In particular  when $n=1$, $c_{0,0,0,0,0,0}^{\<211>}=1$ 
which is consistent with the expansion for single-layer KPZ equation.

\begin{center}
\begin{tikzpicture}
\draw[Ip] (-1.2,2.4) node[xi] {} -- node[left] {$m_1$} (-0.8,1.6)   node[dot] {} -- node[left] {$m_3$}  (-0.4,0.8)   node[dot] {} -- node[left] {$m_5$}  (0,0)  node[dot] {} -- node[right] {$m_6$}   (0.8,0.8) node[xi] {};
\draw[Ip] (-0.4,0.8) -- node[right] {$m_4$}  (0.4,1.6) node[xi] {};
\draw[Ip] (-0.8,1.6) -- node[right] {$m_2$}  (0,2.4) node[xi] {};
\end{tikzpicture}
\end{center}

Now we do contractions for the noises.
 There are two ways of contraction (see the above figure): the first way is to contract the pair $(m_1,m_6)$ and the pair
$(m_2,m_4)$; the second way is  to contract the pair $(m_1,m_4)$ and  the pair $(m_2,m_6)$.

Write 
\[
G_k^{(\e)}\eqdef G_k *\rho_\eps^{(2)}\;.
\]
For the first contraction one has
\[
D_{m_1,m_6}^{(\e)}
=
-\sum_{k =0}^{m_6}
(-2)^{-(m_1+m_6 - k +1)}
\binom{m_1+m_6 - k}{m_1} G_k^{(\e)}  + ...
\]
\[
D_{m_2,m_4}^{(\e)}
=
-\sum_{k =0}^{m_4}
(-2)^{-(m_2+m_4- k +1)}
\binom{m_2+m_4 - k}{m_2} G_k^{(\e)}+ ...
\]
For the second contraction one has
\[
D_{m_1,m_4}^{(\e)}
=
-\sum_{k =0}^{m_4}
(-2)^{-(m_1+m_4 - k +1)}
\binom{m_1+m_4 - k}{m_1} G_k^{(\e)} + ...
\]
\[
D_{m_2,m_6}^{(\e)}
=
-\sum_{k =0}^{m_6}
(-2)^{-(m_2+m_6- k +1)}
\binom{m_2+m_6 - k}{m_2} G_k^{(\e)} + ...
\]
Here ``$...$'' stand for  terms with
 $\wtd{G_k}$; they are omitted, because they only contribute a finite constant to the contracted graphs. Indeed, with a reflected $G_k$, one has a contracted graph which contains a directed loop, for instance
\[
\begin{tikzpicture}[scale=1.5]
\draw[kernel,bend right=40,->]  (0,1.6)   node[dot] {} to node[left] {$G_{k_2}*\rho_\e^{(2)}$}  (0,0.8)   node[dot] {};
\draw[kernel,->]  (0,0.8)  to node[left] {$G'_{m_5}$}  (1,0)  node[dot] {};
\draw[kernel,bend right=60,<-] (0,0.8) to node[right] {$G'_{m_3}$} (0,1.6);
\draw[kernel,bend right=120,->] (1,0) to node[right] {$G_{k_1}*\rho_\e^{(2)}$}  (0,1.6);
\end{tikzpicture}
\]
In this situation since $\rho_\eps^{(2)}$ is supported in a region of size of order $\eps$, and the heat kernel vanishes at negative time variables, the  integral corresponding to the graph is finite as $\e\to 0$.

Putting together these two ways of contractions and \eqref{e:RHS211}, one has
\begin{equs}
4\!\!\!\!\!\!
\sum_{m_1,\cdots,m_6=0}^{n-1} 
\sum_{i\in I,j\ge 0} 
	\binom{n-i-1}{m_5-1} \binom{j-1}{m_1}   \binom{j-1}{m_2}  \binom{i-1}{m_4} 
		 \binom{i- j -1}{ m_3-1} \binom{n-1}{ m_6}  \\
\times   \sum_{k_1 =0}^{m_6} \sum_{k_2 =0}^{m_4}
	(-2)^{-(m_1+m_2+m_4+m_6- k_1-k_2 +2)}
	\Big[
	\binom{m_1+m_6 - k_1}{m_1}
	\binom{m_2+m_4 - k_2}{m_2} \\
	+
	\binom{m_1+m_4 - k_2}{m_1}
	\binom{m_2+m_6 - k_1}{m_2}
	\Big]\\
\times \mathcal G_{m_3,m_5,k_1,k_2}^{(\e)}
\end{equs}
where
\begin{equ}[e:def-mathcalG]
 \mathcal G_{m_3,m_5,k_1,k_2}^{(\e)} \;\; \eqdef\;\;
\begin{tikzpicture}[scale=1.5,baseline=20]
\draw[kernel,bend right=40,->]  (0,1.6)   node[dot] {} to node[left] {$G_{k_2}^{(\e)}$}  (0,0.8)   node[dot] {};
\draw[kernel,->]  (0,0.8)  to node[left] {$G'_{m_5}$}  (1,0)  node[dot] {};
\draw[kernel,bend right=60,<-] (0,0.8) to node[right] {$G'_{m_3}$} (0,1.6);
\draw[kernel,bend right=120,<-] (1,0) to node[right] {$G_{k_1}^{(\e)}$}  (0,1.6);
\end{tikzpicture}
\end{equ}
A resummation yields
\begin{equs}
4\!\!\!\!\!\!\sum_{m_3,m_5,k_1,k_2=0}^{n-1} 
& \Bigg\{
\sum_{m_1,m_2=0}^{n-1} \sum_{m_4=k_2}^{n-1} \sum_{m_6=k_1}^{n-1} 
(-2)^{-(m_1+m_2+m_4+m_6- k_1-k_2 +2)}\\
& \times\sum_{i\in I,j\ge 0} 
	\binom{n-i-1}{m_5-1} \binom{j-1}{m_1}   \binom{j-1}{m_2}  \binom{i-1}{m_4} 
		 \binom{i- j -1}{ m_3-1} \binom{n-1}{ m_6}  \\
&\times  
	\Big[
	\binom{m_1+m_6 - k_1}{m_1}
	\binom{m_2+m_4 - k_2}{m_2} 
	+
	\binom{m_1+m_4 - k_2}{m_1}
	\binom{m_2+m_6 - k_1}{m_2}
	\Big]
	\Bigg\}  \\
&\times \mathcal G_{m_3,m_5,k_1,k_2}^{(\e)}
	\label{e:C2-formula}
\end{equs}

Now we consider the trees $\<40> [m_1,\cdots,m_6]$.
By \eqref{e:hn1}, in the expansion of the right hand of the n-th layer equation, we have the following
term
\begin{equs}
\sum_{m_1,\cdots,m_6=0}^{n-1} 
	\sum_{i=m_1 \vee m_2 +1}^{n-m_5} \sum_{j=m_3 \vee m_4 +1}^{n-m_6} 
	\binom{n-i-1}{m_5-1} \binom{i-1}{m_1} \binom{i-1}{m_2 } \\
	\binom{n-j-1}{m_6-1} \binom{j-1}{m_3} \binom{j-1}{m_4} 
\<40> [m_1,\cdots,m_6]
\end{equs}
where $i,j$ can equivalently sum over $i,j\ge 0$.

\[
\begin{tikzpicture}[scale=1.3]
\node(right)[dot] at (1.2,0) {};
\node(left)[dot] at (-0.4,0) {};
\node(root)[dot] at (0.4,-1) {};
\draw[Ip] (-.8,1) node[xi] {} -- node[left] {$m_1$} (left); 
\draw[Ip] (0,1) node[xi] {} -- node[right] {$m_2$} (left);
\draw[Ip] (.8,1) node[xi] {} -- node[left] {$m_3$} (right); 
\draw[Ip] (right) --node[right] {$m_4$} (1.6,1) node[xi] {};
\draw[Ip] (left) -- node[left] {$m_5$} (root); 
\draw[Ip] (right) -- node[right] {$m_6$} (root);
\end{tikzpicture}
\]

There are two ways of contraction (see the above figure for indices): the first is $(m_1,m_4)$ and
$(m_2,m_3)$; the other way is $(m_1,m_3)$ and $(m_2,m_4)$.
%
After contraction
\begin{equs}
\sum_{m_1,\cdots,m_6=0}^{n-1} 
	\sum_{i,j\ge 0}
	\binom{n-i-1}{m_5-1} \binom{i-1}{m_1} \binom{i-1}{m_2 } 
	\binom{n-j-1}{m_6-1} \binom{j-1}{m_3} \binom{j-1}{m_4} \\
\times  \Big\{ \sum_{k_1 =0}^{m_1} \sum_{k_2 =0}^{m_2}
	(-2)^{-(m_1+m_2+m_3+m_4- k_1-k_2 +2)}
	\Big[
	\binom{m_1+m_4 - k_1}{m_4}
	\binom{m_2+m_3 - k_2}{m_3} \\
	+
	\binom{m_1+m_3 - k_1}{m_3}
	\binom{m_2+m_4 - k_2}{m_4}
	\Big] \times\bar{\bar{\mathcal G}}_{m_5;k_1,k_2;m_6}^{(\e)}\\
+
 \sum_{k_1 =0}^{m_4} \sum_{k_2 =0}^{m_3}
	(-2)^{-(m_1+m_2+m_3+m_4- k_1-k_2 +2)}
	\Big[
	\binom{m_1+m_4 - k_1}{m_1}
	\binom{m_2+m_3 - k_2}{m_2} \\
	+
	\binom{m_1+m_3 - k_2}{m_1}
	\binom{m_2+m_4 - k_1}{m_2}
	\Big] \times \bar{\mathcal G}_{m_5;k_1,k_2;m_6}^{(\e)} \Big\}
\end{equs}
where we have omitted the terms  leading to graphs
with ``directed loops'' which only contribute finite constants, as explained above, and 
\begin{equ}[e:bar-mathcalG]
\bar{\mathcal G}_{m_5;k_1,k_2;m_6}^{(\e)}
\;
\eqdef
\;
\begin{tikzpicture}[scale=1.5,baseline=-10]
\node(right)[dot] at (1.2,0) {};
\node(left)[dot] at (-0.4,0) {};
\node(root)[dot] at (0.4,-1) {};
\draw[bend left = 80, ->] (left) to  node[above] {$G_{k_1}^{(\e)}$} (right); 
\draw[ ->] (left) to node[below] {$G_{k_2}^{(\e)}$} (right); 
\draw[->] (left) to node[left] {$G'_{m_5}$} (root); 
\draw[->] (right) to node[right] {$G'_{m_6}$} (root);
\end{tikzpicture}
\qquad
\bar{\bar{\mathcal G}}_{m_5;k_1,k_2;m_6}^{(\e)}
\;
\eqdef
\;
\begin{tikzpicture}[scale=1.5,baseline=-10]
\node(right)[dot] at (1.2,0) {};
\node(left)[dot] at (-0.4,0) {};
\node(root)[dot] at (0.4,-1) {};
\draw[bend left = 80, <-] (left) to  node[above] {$G_{k_1}^{(\e)}$} (right); 
\draw[ <-] (left) to node[below] {$G_{k_2}^{(\e)}$} (right); 
\draw[->] (left) to node[left] {$G'_{m_5}$} (root); 
\draw[->] (right) to node[right] {$G'_{m_6}$} (root);
\end{tikzpicture}
\end{equ}
Now note that in the parenthesis $\{\cdots\}$ the first sum is equal to the second sum, if we switch
$m_1\leftrightarrow m_3, m_2\leftrightarrow m_4, m_5\leftrightarrow m_6, i\leftrightarrow j,k_1\leftrightarrow k_2$. 
We only consider the second sum (namely the one with $\bar{\mathcal G}_{m_5;k_1,k_2;m_6}^{(\e)}$), which will cause a factor $2$.
%
(Of course for $k_1\neq k_2$, swapping $k_1$ and $k_2$ results in the same graph $\bar{\mathcal G}_{m_5;k_1,k_2;m_6}^{(\e)}$, but we do not combine these identical graphs for now.)

Combining the two sums in the parenthesis $\{\cdots\}$ and
re-summing, one has
\begin{equs}
2\!\!\!\!\!\!\sum_{m_5,m_6,k_1,k_2=0}^{n-1}   &  \Bigg\{
\sum_{m_1,m_2=0}^{n-1} 
\sum_{m_3=k_2}^{n-1}\sum_{m_4=k_1}^{n-1} 
(-2)^{-(m_1+m_2+m_3+m_4- k_1-k_2 +2)}\\
&\sum_{i,j\ge 0}
	\binom{n-i-1}{m_5-1} \binom{i-1}{m_1} \binom{i-1}{m_2 } 
	\binom{n-j-1}{m_6-1} \binom{j-1}{m_3} \binom{j-1}{m_4} \\
	&\Big[
	\binom{m_1+m_4 - k_1}{m_1}
	\binom{m_2+m_3 - k_2}{m_2} 
	+
	\binom{m_1+m_3 - k_2}{m_1}
	\binom{m_2+m_4 - k_1}{m_2}
	\Big] \Bigg\}\\
&\qquad\times
\bar{\mathcal G}_{m_5;k_1,k_2;m_6}^{(\e)} 
	\label{e:C3-formula}
\end{equs}

\subsection{Computation of contracted graphs}

Recall that the $n$-th Hermite polynomial is $H_n(x)=(-1)^n e^{\frac{x^2}{2}} \partial_x^n e^{-\frac{x^2}{2}}$. Let $\tilde H(x)=e^{-\frac{x^2}{2}}$.
Note that $\tilde H(x/\sqrt{2t}) = e^{-\frac{x^2}{4t}}$ and 
$\partial_x^n \tilde H(x/\sqrt{2t}) =\frac{1}{(2t)^{n/2}} H^{(n)}(x/\sqrt{2t})$

We have the following formula for derivatives of heat kernel
\begin{equ} [e:Gder-to-HG]
G^{(n)} (t,x) \eqdef \partial_x^n G(t,x) =
\frac{(-1)^n}{(2t)^{n/2}}  H_n(x/\sqrt{2t}) G(t,x)
\end{equ}
Another general formula:
\begin{equ} [e:G_n-to-Gder]
G_n (t,x) = G \star G'' \star \cdots \star G'' = \partial_x^{2n} G\star\cdots\star G = \frac{t^n}{n!} G^{(2n)}
\end{equ}

We compute 
$ \mathcal G_{m_3,m_5,k_1,k_2}^{(\e)}$
as defined in 
\eqref{e:def-mathcalG}.

{\bf Case 1.}
We first consider the case $m_3 - k_2 \in \{-1,0\}$, which means 
\[
|2m_3 +1 -2k_2|=1.
\]

The two ``parallel'' edges $G'_{m_3}$ and $G_{k_2}^{(\e)}$ 
in \eqref{e:def-mathcalG}
can be dealt with as follows: by \eqref{e:G_n-to-Gder}
\begin{equs} 
G'_{m_3} G_{k_2} 
= 
	 \frac{t^{m_3}}{m_3 !} G^{(2 m_3 +1)}
	  \frac{t^{k_2}}{k_2 !} G^{(2k_2)}
= \frac12 \frac{t^{m_3+k_2}}{m_3 ! k_2 !} 
\partial_x \Big( (G^{(2m_3+1)\wedge (2k_2)})^2 \Big)
\end{equs}
where the last step used the assumption $|2m_3 +1 -2k_2|=1$.
Replacing  $G'_{m_3}\cdot G_{k_2}^{(\e)}$ in
$ \mathcal G_{m_3,m_5,k_1,k_2}^{(\e)}$
by 
\[
\frac12 \frac{t^{m_3+k_2}}{m_3 ! k_2 !} 
\partial_x \Big( (G^{(2m_3+1)\wedge (2k_2)})^2 *\rho_\e^{(2)} \Big)
\]
 only causes a finite difference and does not change
the logarithmically divergent part of   $ \mathcal G_{m_3,m_5,k_1,k_2}^{(\e)}$.

With an integration by parts, we shift the x-derivative in the above expression to $G_{k_1}^{(\e)}$ in \eqref{e:def-mathcalG}, which produces a negative sign. The pair of edges connected to the bottom vertex in \eqref{e:def-mathcalG} is then $D_{m_5,k_1}$. We have
\begin{equs}
D_{m_5,k_1} &= -\sum_{\ell=0}^{k_1} 
	(-2)^{-(m_5+k_1-\ell+1)}  \binom{m_5+k_1-\ell}{m_5} G_\ell
	+ ... \\
&= -\sum_{\ell=0}^{k_1} (-2)^{-(m_5+k_1-\ell+1)}  \binom{m_5+k_1-\ell}{m_5} \frac{t^\ell}{\ell !} G^{(2\ell)} + ...
\end{equs}
where the terms ``...'' are omitted because they will only contribute finite constants as explained above.
So 
\begin{equs}
 \mathcal G_{m_3,m_5,k_1,k_2}^{(\e)} =
 \int_{|t|\ge\eps^2} &
\frac12 \sum_{\ell=0}^{k_1} 
	(-2)^{-(m_5+k_1-\ell+1)} 
	 \binom{m_5+k_1-\ell}{m_5} \\
	& \cdot \frac{t^{\ell+k_2+m_3}}{\ell ! k_2 ! m_3 !} 
	 G^{(2\ell)} \cdot (G^{(2m_3+1)\wedge (2k_2)})^2
\,dtdx
+O(1)
\end{equs}
where functions in the integrand depend on $(t,x)$,
and we have replaced mollifiers by a small scale cutoff in the integral which only causes an $O(1)$ constant.
Using \eqref{e:Gder-to-HG},  
we have
\begin{equs}
t^{\ell+k_2+m_3} &
	 G^{(2\ell)} \cdot (G^{(2m_3+1)\wedge (2k_2)})^2 \\
&=\frac{t^{\ell+k_2+m_3}}{(2t)^\ell (2t)^{(2m_3+1) \wedge (2k_2)}} 
	 H_{2\ell}(x/\sqrt{2t}) H_{(2m_3+1)\wedge (2k_2)}(x/\sqrt{2t})^2  G(t,x)^3 \;.
\end{equs}
Note that the entire power of $t$ does not depend on whether $m_3-k_2=-1$ or $0$, i.e., the powers of $t$ cancel out.

%

{\bf Case 2.}
Let's turn to  the case of $m_3-k_2 \ge 1$. 
We first write the two ``parallel'' edges  in \eqref{e:def-mathcalG} as
\[
G_{m_3}' G_{k_2} 
= \frac{t^{m_3+k_2}}{m_3! k_2!} G^{(2m_3+1)} G^{(2k_2)}
\]
Now the idea is the following: if we integrate by parts w.r.t. the  vertex $v$ (see \eqref{e:IBP-case2}), a derivative will be shifted from $G^{(2m_3+1)}$
to either $G'_{m_5}$ or $G^{(2k_2)}$. The good situations
are that either we have $G''_{m_5}$ (one of these two derivatives can be shifted to $G_{k_1}$) so that we can 
apply the identity for $D_{\cdot,\cdot}$ to convolve the bottom vertex (as first term on RHS of \eqref{e:IBP-case2}),
or the two parallel edges represent derivatives of $G$
of orders that only differ by one (so that it can be written as 
$\frac12 \partial_x ((\cdots)^2)$ as in Case 1).
To be more precise, starting from \eqref{e:def-mathcalG},
and ignoring for the moment all powers of $t$, using~\eqref{e:G_n-to-Gder} the above leads to the analysis of
\begin{equ}[e:IBP-case2]
\begin{tikzpicture}[scale=1.5,baseline=20]
\node at (-0.2,0.8) {$v$};
\draw[kernel,bend right=40,->]  (0,1.6)   node[dot] {} to node[left] {$G^{(2k_2)}$}  (0,0.8)   node[dot] {};
\draw[kernel,->]  (0,0.8)  to node[left] {$G'_{m_5}$}  (1,0)  node[dot] {};
\draw[kernel,bend right=20,<-] (0,0.8) to node[right] {$G^{(2m_3+1)}$} (0,1.6);
\draw[kernel,bend right=130,<-] (1,0) to node[right] {$G_{k_1}$}  (0,1.6);
\end{tikzpicture}
 = 
\begin{tikzpicture}[scale=1.5,baseline=20]
\draw[kernel,bend right=40,->]  (0,1.6)   node[dot] {} to node[left] {$G^{(2k_2)}$}  (0,0.8)   node[dot] {};
\draw[kernel,->]  (0,0.8)  to node[left] {$G''_{m_5}$}  (1,0)  node[dot] {};
\draw[kernel,bend right=20,<-] (0,0.8) to node[right] {$G^{(2m_3)}$} (0,1.6);
\draw[kernel,bend right=130,<-] (1,0) to node[right] {$G_{k_1}$}  (0,1.6);
\end{tikzpicture}
 -
\begin{tikzpicture}[scale=1.5,baseline=20]
\draw[kernel,bend right=40,->]  (0,1.6)   node[dot] {} to node[left] {$G^{(2k_2+1)}$}  (0,0.8)   node[dot] {};
\draw[kernel,->]  (0,0.8)  to node[left] {$G'_{m_5}$}  (1,0)  node[dot] {};
\draw[kernel,bend right=20,<-] (0,0.8) to node[right] {$G^{(2m_3)}$} (0,1.6);
\draw[kernel,bend right=130,<-] (1,0) to node[right] {$G_{k_1}$}  (0,1.6);
\end{tikzpicture}
\end{equ}
Here and below we omit the mollifiers to make the graphic notation simpler.
If $m_3-k_2\in\{-1,0\}$ we stop at this point
because the second term above can be studied as in Case 1, otherwise we apply another integration by parts to the second term above and we repeat this process until we can make use of the techniques used in the case $m_3-k_2\in\{-1,0\}$. In this way we see that the graph  becomes 
\begin{equation}
\begin{aligned}
&\begin{tikzpicture}[scale=1.5,baseline=20]
\draw[bend right=40,->]  (0,1.6)   node[dot] {} to node[left] {$G^{(2k_2)}$}  (0,0.8)   node[dot] {};
\draw[->]  (0,0.8)  to node[left] {$G''_{m_5}$}  (0.5,0)  node[dot] {};
\draw[bend right=20,<-] (0,0.8) to node[right] {$G^{(2m_3)}$} (0,1.6);
\draw[bend right=130,<-] (0.5,0) to node[right] {$G_{k_1}$}  (0,1.6);
\end{tikzpicture} -
\begin{tikzpicture}[scale=1.5,baseline=20]
\draw[bend right=40,->]  (0,1.6)   node[dot] {} to node[left] {$G^{(2k_2+1)}$}  (0,0.8)   node[dot] {};
\draw[->]  (0,0.8)  to node[left] {$G''_{m_5}$}  (0.5,0)  node[dot] {};
\draw[bend right=20,<-] (0,0.8) to node[right] {$G^{(2m_3-1)}$} (0,1.6);
\draw[bend right=130,<-] (0.5,0) to node[right] {$G_{k_1}$}  (0,1.6);
\end{tikzpicture}
+ 
\begin{tikzpicture}[scale=1.5,baseline=20]
\draw[bend right=40,->]  (0,1.6)   node[dot] {} to node[left] {$G^{(2k_2+2)}$}  (0,0.8)   node[dot] {};
\draw[->]  (0,0.8)  to node[left] {$G'_{m_5}$}  (0.5,0)  node[dot] {};
\draw[bend right=20,<-] (0,0.8) to node[right] {$G^{(2m_3-1)}$} (0,1.6);
\draw[bend right=130,<-] (0.5,0) to node[right] {$G_{k_1}$}  (0,1.6);
\end{tikzpicture}\\
&=\ldots=
\sum_{\ell=1}^{m_3-k_2}(-1)^{\ell-1}\begin{tikzpicture}[scale=1.5,baseline=20]
\draw[bend right=40,->]  (0,1.6)   node[dot] {} to node[left] {$G^{(2k_2+\ell-1)}$}  (0,0.8)   node[dot] {};
\draw[->]  (0,0.8)  to node[left] {$G''_{m_5}$}  (0.5,0)  node[dot] {};
\draw[bend right=20,<-] (0,0.8) to node[right] {$G^{(2m_3+1-\ell)}$} (0,1.6);
\draw[bend right=130,<-] (0.5,0) to node[right] {$G_{k_1}$}  (0,1.6);
\end{tikzpicture} +
(-1)^{m_3-k_2}\begin{tikzpicture}[scale=1.5,baseline=20]
\draw[bend right=40,->]  (0,1.6)   node[dot] {} to node[left] {$G^{(m_3+k_2)}$}  (0,0.8)   node[dot] {};
\draw[->]  (0,0.8)  to node[left] {$G'_{m_5}$}  (0.5,0)  node[dot] {};
\draw[bend right=20,<-] (0,0.8) to node[right] {$G^{(m_3+k_2+1)}$} (0,1.6);
\draw[bend right=130,<-] (0.5,0) to node[right] {$G_{k_1}$}  (0,1.6);
\end{tikzpicture} \\
&:= A + B.
\end{aligned}
\end{equation}

Now for the term $A$ we convolve the origin
\begin{equ}
D_{m_5,k_1} 
= -\sum_{s=0}^{k_1} (-2)^{-(m_5+k_1-s+1)}  \binom{m_5+k_1-s}{m_5} \frac{t^s}{s !} G^{(2s)} + \cdots
\end{equ}
where ``$\cdots$'' are again  terms that will only contribute finite constants.
For each fixed $s$ we need to integrate
\[
\frac{t^{s+m_3+k_2}}{s! m_3! k_2 !} 
G^{(2s)} G^{(2m_3+1-\ell)} G^{(2k_2+\ell-1)}
\]
Using \eqref{e:Gder-to-HG} we get
\[
\frac{1}{s! m_3! k_2 !} \frac{1}{2^{s+m_3+k_2}} 
\int_{|t|\ge \e^2} H_{2s}(x/\sqrt{2t}) H_{2m_3 + 1 -\ell} (x/\sqrt{2t})
H_{2k_2+\ell-1}(x/\sqrt{2t})
G(t,x)^3 \,dtdx
\]


Regarding $B$, note that 
$G^{(m_3+k_2+1)} G^{(m_3+k_2)} 
=\frac12 \partial_x ((G^{(m_3+k_2)} )^2)$. Shifting this $\partial_x$
to $G_{k_1}$ and integrating out the bottom vertex,
we can proceed as before. 


{\bf Case 3.}
We can apply the same strategy for the case $m_3-k_2 \le -2$. Integrate by parts until one can write the product of the parallel edges as $1/2$ times the spacial derivative of a square. 

Below we summarize the formulas we found.
\begin{itemize}
\item[1.] If $m_3-k_2\in\{-1,0\}$. Then the tall tree equals
\begin{equs}[e:TallTreeCase1]
\frac12 \sum_{s=0}^{k_1}&(-2)^{-(m_5+k_1-s+1)}{m_5+k_1-s\choose m_5}
\frac{1}{s!k_2!m_3!2^{s+(2m_3+1)\wedge(2k_2)}}\\
&\times\int H_{2s}(x/\sqrt{2t})H_{(2m_3+1)\wedge(2k_2)}(x/\sqrt{2t})^2 G(x,t)^3.
\end{equs}
\item[2.] If $m_3-k_2\geq 1$. In this case the tall tree can be written as $A+B$ with
\begin{equs}[e:TallTreeCase2A]
A= \sum_{\ell=1}^{m_3-k_2}&(-1)^{\ell+1}\sum_{s=0}^{k_1}(-2)^{-(m_5+k_1-s+1)}{m_5+k_1-s\choose m_5}
\frac{1}{s!m_3!k_2!2^{s+m_3+k_2}}\\
&\times\int H_{2s}(x/\sqrt{2t})H_{(2m_3+1-\ell)}(x/\sqrt{2t})H_{(2k_2+\ell-1)}(x/\sqrt{2t})G(x,t)^3
\end{equs}
and
\begin{equs}[e:TallTreeCase2B]
B= (-1)^{m_3-k_2}\frac12\sum_{s=0}^{k_1}&(-2)^{-(m_5+k_1-s+1)}{m_5+k_1-s\choose m_5}
\frac{1}{s!m_3!k_2!2^{s+m_3+k_2}}\\
&\times \int H_{2s}(x/\sqrt{2t})H_{m_3+k_2}(x/\sqrt{2t})^2G(x,t)^3.
\end{equs}
\item[3.] If $m_3-k_2\leq -2$ we should get that the tall tree is again of the form $A+B$ with
\begin{equs}[e:TallTreeCase3A]
A= \sum_{\ell=1}^{k_2-m_3-1}&(-1)^{\ell+1}\sum_{s=0}^{k_1}(-2)^{-(m_5+k_1-s+1)}{m_5+k_1-s\choose m_5}
\frac{1}{s!m_3!k_2!2^{s+m_3+k_2}}\\
&\times\int H_{2s}(x/\sqrt{2t})H_{(2m_3+\ell)}(x/\sqrt{2t})H_{(2k_2-\ell)}(x/\sqrt{2t})G(x,t)^3
\end{equs}
and 
\begin{equs}[e:TallTreeCase3B]
B= (-1)^{k_2-m_3-1}\frac12\sum_{s=0}^{k_1}&(-2)^{-(m_5+k_1-s+1)}{m_5+k_1-s\choose m_5}
\frac{1}{s!m_3!k_2!2^{s+m_3+k_2}}\\
&\times \int H_{2s}(x/\sqrt{2t})H_{m_3+k_2}(x/\sqrt{2t})^2G(x,t)^3.
\end{equs}
\end{itemize}

Let's describe a general formula for the logarithmic constant from the trees $\<40>$. 
Recall  $\bar{\mathcal G}_{m_5;k_1,k_2;m_6}^{(\e)}$ defined in
\eqref{e:bar-mathcalG}.

The two edges pointing to the bottom vertex carry the kernels $G'_{m_5}$ and $G'_{m_6}$.
This is precisely our kernel $D_{m_5,m_6}$. One has
\begin{equation}
D_{m_5,m_6}=
-\sum_{s=0}^{m_5}(-2)^{-(m_5+m_6-s+1)}{m_5+m_6-s\choose m_6}G_s +\cdots,
\end{equation}
where $\cdots$ is the part that only  contribute finite constants.
Fix $s$. Then, we need to evaluate $G_s G_{k_1}G_{k_2}$. Recall that $G_k(x,t)=\frac{t^k}{k!}G^{(2k)}(x,t)=\frac{t^k}{k!(2t)^k}H_{2k}(x/\sqrt{2t})G(x,t)$.
Thus, we get
\begin{equs}[e:WideTree]
\bar{\mathcal G}_{m_5;k_1,k_2;m_6}^{(\e)}
& =
-\sum_{s=0}^{m_5}(-2)^{-(m_6+m_5-s+1)}{m_6+m_5-s\choose m_6}\frac{1}{k_1!k_2!s!2^{k_1+k_2+s}}\\
&\qquad
\times\int_{|t|\ge \e^2} H_{2k_1}(x/\sqrt{2t})H_{2k_2}(x/\sqrt{2t})H_{2s}(x/\sqrt{2t})G(x,t)^3 \,dtdx.
\end{equs}


It is possible to explicitly carry out the integrals
of the type
\[
\int  
\Big(
\prod_{j=1}^{3} H_{n_{j}}(x/\sqrt{2t})
\Big)
G(t,x)^{3}\,dxdt
\]
Indeed, one has the following lemma.
However, having the values of these integrals
does not seem to lead us to more instructive formulae.
On the other hand,
with the following lemma we may 
compute the logarithmic constants on a computer, see Remark~\ref{rem:logs}.

\begin{lemma}
Define, for $n_{1},n_{2},n_{3} \in \Z_{\ge 0}$
\begin{equ}
A_{n_{1},n_{2},n_{3}}
\eqdef 
\int dx 
\Big(
\prod_{j=1}^{3} H_{n_{j}}(x/\sqrt{2t})
\Big)
G(t,x)^{3}
\end{equ}
Then $A_{n_{1},n_{2},n_{3}} = 0$ if $n_{1} + n_{2} + n_{3}$ is odd and otherwise one has
\begin{equs}
A_{n_{1},n_{2},n_{3}}
=
\frac{2^{-(n_{1}+n_{2}+n_{3})/2}}{3^{1/2}4\pi t }
\sum_{
\substack{
r_{1}, r_{2}, r_{3}\\
r_{1} + r_{2} + r_{3}
\le
(n_{1} + n_{2} + n_{3})/2}}&
\Big(
\prod_{j=1}^{3}
(-1)^{r_{j}}
3^{r_{j} - n_{j}/2}
\frac{n_{j}!}{r_{j}!(n_{j}-2r_{j})!}
\Big)\\
{}&
\qquad
\times
\frac
{
\Big(
\sum_{j=1}^{3} n_{j} - 2r_{j}
\Big)!}
{
\Big(
\sum_{j=1}^{3} n_{j}/2 - r_{j}
\Big)!}
\end{equs}	
\end{lemma}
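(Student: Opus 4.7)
My plan is to reduce the stated integral to a sum of elementary Gaussian monomial integrals. First, I would use $G(t,x)^{3} = (4\pi t)^{-3/2}\exp(-3x^{2}/(4t))$ (the periodic sum in the definition of $G$ agrees with the free heat kernel up to corrections that are exponentially small in $1/t$ and integrate against polynomials to give harmless $O(1)$ errors — these play no role at the level of the closed form below) and substitute $u = x/\sqrt{2t}$. This turns the Hermite arguments into $u$, produces the scalar prefactor $\sqrt{2t}/(4\pi t)^{3/2} = 1/(4\sqrt{2}\pi^{3/2}t)$, and reduces the problem to evaluating
\[
\int_{\R} \Big(\prod_{j=1}^{3}H_{n_{j}}(u)\Big)\,e^{-3u^{2}/2}\,du .
\]

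The odd case $n_{1}+n_{2}+n_{3}$ odd is immediate by parity ($H_{n}$ has parity $(-1)^{n}$ while the weight is even). In the even case I would expand each Hermite polynomial in its standard monomial form
\[
H_{n}(u) = \sum_{r=0}^{\lfloor n/2\rfloor}\frac{(-1)^{r}\,n!}{r!\,2^{r}\,(n-2r)!}\,u^{n-2r},
\]
interchange the sums with the integral, and evaluate each monomial Gaussian integral. Setting $N := \sum_{j}(n_{j}-2r_{j})$ (necessarily even by the parity step), the rescaling $v = u\sqrt{3}$ together with $\int v^{2m}e^{-v^{2}/2}dv = \sqrt{2\pi}\,(2m)!/(2^{m}m!)$ yields
\[
\int_{\R}u^{N}e^{-3u^{2}/2}\,du \;=\; \sqrt{\tfrac{2\pi}{3}}\cdot 3^{-N/2}\cdot \frac{N!}{2^{N/2}(N/2)!} .
\]
The requirement $N \ge 0$ translates into the stated summation constraint $r_{1}+r_{2}+r_{3} \le (n_{1}+n_{2}+n_{3})/2$.

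Finally I would assemble the pieces. The scalar prefactors combine as $\sqrt{2\pi/3}/(4\sqrt{2}\pi^{3/2}t) = 1/(3^{1/2}\,4\pi t)$. The powers of $2$ from the Hermite expansions, namely $\prod_{j}2^{-r_{j}}$, together with $2^{-N/2} = 2^{-\sum_{j}n_{j}/2+\sum_{j}r_{j}}$ from the moment formula, telescope into a single global $2^{-(n_{1}+n_{2}+n_{3})/2}$ that can be pulled outside the sum. The remaining $3^{-N/2}$ distributes into the product as $\prod_{j} 3^{r_{j}-n_{j}/2}$, while the pieces $(-1)^{r_{j}}\,n_{j}!/(r_{j}!(n_{j}-2r_{j})!)$ sit naturally inside the product and $N!/(N/2)!$ sits outside. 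This reproduces the claimed closed form verbatim.

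There is no conceptual difficulty in this argument; the only step that requires care is the combinatorial bookkeeping of the $2$- and $3$-adic exponents and the interchange of sums, which is legitimate because only finitely many $(r_{1},r_{2},r_{3})$ contribute. A cleaner-looking alternative would be to extract coefficients from the trivariate generating function $\sum s_{1}^{n_{1}}s_{2}^{n_{2}}s_{3}^{n_{3}}/(n_{1}!n_{2}!n_{3}!)A_{n_{1},n_{2},n_{3}}$ via Mehler's formula applied to the Gaussian $e^{-3u^{2}/2}$, but the direct monomial expansion above is the most transparent route.
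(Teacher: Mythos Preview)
Your argument is correct and essentially parallels the paper's own proof. The paper's proof is a one-liner: it performs the change of variable $y=\sqrt{3/(2t)}\,x$ (your $v=\sqrt{3}\,u$) and then invokes an expansion from Carlitz for a triple product of rescaled Hermite polynomials, whereas you replace that citation with the direct monomial expansion of each $H_{n_j}$ and the even-moment formula for the standard Gaussian. The bookkeeping you carry out for the powers of $2$ and $3$ is exactly what the Carlitz identity encodes, so the two routes are the same computation in slightly different packaging; yours has the advantage of being self-contained, the paper's of being shorter. Your parenthetical remark about the periodic versus free heat kernel is also appropriate: the stated closed form is exact for the free kernel, and in the context of the paper only the short-time (hence logarithmically divergent) behaviour matters, where the difference is irrelevant.
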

\begin{proof}
This is a straightforward computation using a change of variable  $\sqrt{\frac{3}{2t}}x \mapsto y$ and an expansion for a triple product of scaled Hermite polynomials \cite{carlitz1962product} . 
\end{proof}

\begin{remark}\label{rem:logs}
With Mathematica we found that the logarithmic renormalization constants do sum to a non-zero constant for the second, third and fourth layer.
In fact, with help of Mathematica  we have
$C_{\e,1}^{(2)}=-\frac12 \frac{1}{4\sqrt{3}\pi} \log\e$
and $C_{\e,1}^{(3)}=\frac12 \frac{1}{4\sqrt{3}\pi} \log\e$ (consistent with \cite[Section~3.2]{NonGaussShen}), which cancel out;
we have
\[
C_{\e,2}^{(2)}=-\frac{85}{288} \frac{1}{4\sqrt{3}\pi} \log\e
\qquad
C_{\e,2}^{(3)}=\frac{47}{144} \frac{1}{4\sqrt{3}\pi} \log\e
\]
\[
C_{\e,3}^{(2)}=-\frac{995}{6912} \frac{1}{4\sqrt{3}\pi} \log\e
\qquad
C_{\e,3}^{(3)}=\frac{445}{3456} \frac{1}{4\sqrt{3}\pi} \log\e
\]
\[
C_{\e,4}^{(2)}=-\frac{5129851}{53747712} \frac{1}{4\sqrt{3}\pi} \log\e
\qquad
C_{\e,4}^{(3)}=\frac{1018585}{13436928} \frac{1}{4\sqrt{3}\pi} \log\e
\]
\end{remark}

\appendix
\section{A short analysis of $G_i$}
\label{A}
Let $\zeta\in\R$. We say that a kernel $G$ defined on $\R^d\setminus\{0\}$ or on a subset thereof
is of order $\zeta$ if for all multiindizes $k$ there exists a constant $C>0$ such that
\begin{equation}
\sup_{\|x\|_\s\leq 1} |D^k K(x)|\leq C\|x\|_\s^{\zeta-|k|_\s}.
\end{equation}
\begin{proposition}
\label{Prop:G}
The kernel $G_i$ is of order $-1$ for every $i\geq 0$.
\end{proposition}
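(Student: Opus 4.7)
The plan is to derive an explicit closed form for $G_i$ and then invoke Gaussian estimates. Since $\partial_x^2$ commutes with convolution, $G_i = G*(\partial_x^2 G)^{*i} = \partial_x^{2i}\,G^{*(i+1)}$. The spatial semigroup property $G(t,\cdot)*_x G(s,\cdot) = G(t+s,\cdot)$ for the heat kernel on $S^1$, iterated over the simplex $0<s_1<\cdots<s_n<t$, yields $G^{*(n+1)}(t,x) = (t^n/n!)\,G(t,x)$ --- the same identity used in the proof of Lemma~\ref{lem:1overeps}. Combining these gives the closed form $G_i = (t^i/i!)\,\partial_x^{2i} G$ (already anticipated near \eqref{e:G_n-to-Gder}), reducing Proposition~\ref{Prop:G} to pointwise bounds on $D^k(t^i\partial_x^{2i}G)$.

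Next I would separate the singular behaviour at the origin from the periodic corrections. Write $G(t,x) = G^\R(t,x) + R(t,x)$, where $G^\R(t,x) = (4\pi t)^{-1/2}e^{-x^2/(4t)}\mathbf{1}_{t>0}$ is the Euclidean heat kernel (the $k=0$ summand) and $R$ collects the summands indexed by $k\in 2\pi\Z\setminus\{0\}$. On the parabolic ball $\|(t,x)\|_\s \le 1$ one has $|x - 2\pi k| \ge 2\pi - 1 > 0$ whenever $k\neq 0$, so each such summand, extended by $0$ to $t\le 0$, is smooth with all derivatives bounded by constants times $e^{-c/t}t^{-N}$. Hence the contribution of $R$ to $G_i$ is a smooth, uniformly bounded remainder that does not affect the order, and it suffices to verify that $(t^i/i!)\,\partial_x^{2i}G^\R$ is of order $-1$.

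For the Euclidean part I would use the Hermite identity \eqref{e:Gder-to-HG}, $\partial_x^n G^\R(t,x) = (-1)^n(2t)^{-n/2}H_n(x/\sqrt{2t})G^\R(t,x)$, together with the absorption $|H_n(y)|e^{-y^2/2}\lesssim_n e^{-y^2/4}$, to obtain $|\partial_x^n G^\R(t,x)| \lesssim_n t^{-(n+1)/2}e^{-x^2/(8t)}$. Setting $n=2i$ and multiplying by $t^i$ gives $|(t^i/i!)\partial_x^{2i}G^\R(t,x)| \lesssim t^{-1/2}e^{-x^2/(8t)} \lesssim (\sqrt{t}+|x|)^{-1} = \|(t,x)\|_\s^{-1}$; the last bound is immediate for $|x|\le\sqrt{t}$, and for $|x|\ge\sqrt{t}$ it follows from boundedness of $y\mapsto y e^{-y^2/8}$ to trade $e^{-x^2/(8t)}$ for $\sqrt{t}/|x|$. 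For a general multiindex $k=(k_0,k_1)$, Leibniz on $t^i\partial_x^{2i}G^\R$ combined with $\partial_t G^\R = \partial_x^2 G^\R$ on $\{t>0\}$ expresses $D^k G_i$ as a finite sum of terms $c_m\,t^{i-m}\partial_x^{2(i+k_0-m)+k_1}G^\R$ with $0\le m \le k_0\wedge i$. Each such term has parabolic degree $2(i-m) - (2(i+k_0-m)+k_1) - 1 = -1 - |k|_\s$, and the same Gaussian absorption argument then gives the required pointwise bound.

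The main obstacle I anticipate is the Leibniz/Hermite bookkeeping: one must check that after rewriting time derivatives via $\partial_t G^\R = \partial_x^2 G^\R$ and expanding Leibniz on $t^i\partial_x^{2i}G^\R$, the positive powers of $t$, the Hermite-induced powers of $t^{-1/2}$ and the Gaussian factor conspire to produce exactly the scaling $\|(t,x)\|_\s^{-1-|k|_\s}$ uniformly in $k$. Once this combinatorial step is set up cleanly, the reduction to the Euclidean kernel via the periodic-corrections decomposition and the Gaussian absorption estimates are entirely standard.
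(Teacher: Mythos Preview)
Your argument is correct but takes a genuinely different route from the paper. The paper proceeds by induction on $i$: the base case $i=0$ is the standard heat-kernel estimate, and for the inductive step one writes $G_1 = \partial_x G * \partial_x G$ (or $G_i = G_{i-1} * \partial_x^2 G$ for $i>1$) and invokes the convolution lemma \cite[Lemma~10.14]{Regularity}, which says that convolving kernels of orders $\zeta_1$ and $\zeta_2$ yields a kernel of order $\zeta_1+\zeta_2+|\s|$; here $-2-2+3=-1$ (resp.\ $-1-3+3=-1$). You instead derive the closed form $G_i = (t^i/i!)\partial_x^{2i}G$, split off the smooth periodic corrections, and then bound derivatives of the Euclidean piece directly via the Hermite representation and Gaussian absorption. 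The paper's proof is shorter because it outsources the analytic work to a black-box convolution estimate; your approach is more self-contained and yields explicit bounds without appealing to \cite{Regularity}, at the price of the Leibniz/Hermite bookkeeping you flag. Since the explicit formula $G_i = (t^i/i!)\partial_x^{2i}G$ is needed elsewhere in the paper anyway (cf.\ \eqref{e:G_n-to-Gder}), your route has the minor virtue of not introducing anything new.
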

\begin{proof}
The result is well known for the usual heat kernel, see for instance~\cite[Lemma 7.4]{Regularity}. If $i=1$, then note that after a partial integration we have the identity
\begin{equation}
G_1= \partial_x G *\partial_x G,
\end{equation}
and both kernels on the right hand side are of order $-2$. The claim therefore follows from~\cite[Lemma 10.14]{Regularity}. For $i>1$ we can write $G_i= G_{i-1}* \partial_x^2 G$ so that we can conclude as in the case $i=1$.
\end{proof}
%
%

\section{Useful identity}
\label{sec:identity}

The identity
\begin{equ}[eq:heatkernelidentity]
2G'*\wtd{G'} = G+\wtd{G}
\end{equ}
was used in \cite[Section 6]{KPZJeremy}
where the tilde is the reflection, i.e. 
$\wtd F(z):=F(-z)$. 
Iteratively applying this identity we can get useful identities 
for convolutions of (derivatives of) $G_m$. 
For instance we have
$4G_0' * \wtd{G_1'} = 2 \wtd{G_1} -G_0 -\wtd G_0$,
since
\[
4 G'* \wtd{G'} *\wtd{G''}  =2(G + \wtd{G} )*\wtd{G''} 
= 2 \wtd{G} *\wtd{G''}  -2G'*\wtd{G'} 
 = 2 \wtd{G}*\wtd{G''} -G-\wtd G
\]
where we used integration by parts to shift a derivative
$2 G * \wtd{G''} 
=  -2G'*\wtd{G'} $ in the second step.
When the indices are large this calculation 
can get more involved, for instance, we have
$4G_1' * \wtd{G_1'} = - G_1 - \wtd{G_1} + G_0 + \wtd G_0$, 
because 
\begin{equs}
4 &G''*G'* \wtd{G'} *\wtd{G''} 
=2 G'' *  (G + \wtd{G} )*\wtd{G''} 
=2 G'' *  G *\wtd{G''} +2 G'' *   \wtd{G} *\wtd{G''} \\
&=-2 G'' *  G' *\wtd{G'} -2 G' *   \wtd{G'} *\wtd{G''} 
= -G''*(G+\wtd G) - (G+\wtd G)*\wtd{G''} \\
&= -G*G'' - \wtd{G}*\wtd{G''} - G''*\wtd G - G*\wtd{G''}
= -G*G'' - \wtd{G}*\wtd{G''} + 2G'*\wtd {G'} \\
&= -G*G'' - \wtd{G}*\wtd{G''} +G+\wtd G \;.
\end{equs}

%
%
%
%
%
%

In order to obtain a 
general set of identities 
we define kernels $D_{i,j}$, for $(i, j) \in \Lambda \eqdef \{0,1,2,\dots\}^{2} $ via 
\begin{equ}[e:def-Dij]
D_{i,j}
\eqdef
G'_{i}
\ast
\wtd{G'_{j}}.
\end{equ}
For $i,j > 0$, one has the recursion relation
\[
D_{i,j}
=
-
\frac{1}{2}
\Big(
D_{i-1,j}
+
D_{i,j-1} \Big)\;.
\]
Indeed, making use of~\eqref{eq:heatkernelidentity}, we can write
\begin{equation}
\begin{aligned}
D_{i,j}&= (\partial_x^2 G)^{*i}*G'*\wtd{G'}*(\partial_x^2\wtd G)^{*j}
\\
&= \frac12
\Big((\partial_x^2 G)^{*i}*G*(\partial_x^2\wtd G)^{*j}
+(\partial_x^2 G)^{*i}*\wtd G*(\partial_x^2\wtd G)^{*j} \Big)
\\
&= -\frac12
\Big((\partial_x^2 G)^{*i}*G* \partial_x^2\wtd G*(\partial_x^2\wtd G)^{*(j-1)}
+(\partial_x^2 G)^{*(i-1)}*\partial_x^2 G*\wtd G*(\partial_x^2\wtd G)^{*j} \Big)
\end{aligned}
\end{equation}
and by shifting a derivative the above recursion relation follows.

When $i = 0, j > 0$ one has the recursion

\[
D_{0,j}
=
-\frac{1}{2} D_{0,j-1}
+\frac{1}{2}
G_{j}.
\]
Since $\wtd{D_{i,j}} = D_{j,i}$ we get, when $j=0$ and $i > 0$
\[
D_{i,0}
=
-\frac{1}{2} D_{i-1,0}
+\frac{1}{2}
\widetilde{G_{i}}.
\]
Finally, $D_{0,0} = \frac{1}{2}[G_{0} + \widetilde{G_{0}}]$. 

We now use these recursions to find formula for $D_{i,j}$.
A lattice path is a sequence of nearest neighbor edges (steps) of $\Lambda$ which satisfy the natural adjacency relation.  

Let $B\eqdef \{(x,y) \in \Lambda, i=0 \mbox{ or } j=0\}$ be the boundary of the discrete first quadrant. 
We denote by $W(i,j)$ the set of all lattice paths $\gamma$ which
\begin{itemize}
\item Start at $(i,j)$
\item Only move down or to the left
\item Terminate at a site of $B$ (note, paths are allowed to travel along $B$ for some time, but they always end at a site of $B$, they can't go negative.)
\end{itemize}
For a lattice path $\gamma$ we denote by $l(\gamma)$ the number of steps in $\gamma$. One then has the following formula

\[
D_{i,j}
=
\sum_{\gamma \in W(i,j)}
(-2)^{-l(\gamma)}
F(\gamma_{\mathrm{end}})
\]
where $\gamma_{\mathrm{end}}$ is the final site visited by $\gamma$ and $F$ is a map from the sites of $B$ to kernels given as follows: $F(0,0) = \frac{1}{2}(G + \widetilde{G})$ and
\[
F(0,j)
=
\frac{1}{2} G_{j}
\qquad
\mbox{and}
\qquad
F(i,0)
=
\frac{1}{2} \widetilde{G_{i}}.
\]

For fixed $(i,j) \in \Lambda$ let $B(i,j)$ be the set of sites of $B$ one can reach via walks in $W(i,j)$. 
We then get
\[
D_{i,j}
=
\sum_{(x,y) \in B(i,j)}
(-2)^{-(i+j - x - y)}
\binom{i + j - x - y}{i-x}
F(x,y),
\]
where $\binom{i + j - x - y}{i-x}$ counts the number of paths
from $(i,j)$ to $(x,y)$.
Equivalently,
\begin{equ}[e:Dij-formula]
D_{i,j}
=
- \sum_{k =0}^i
(-2)^{-(i+j - k+1)}
\binom{i + j - k}{j} \wtd{G_k}
-\sum_{k =0}^j
(-2)^{-(i+j - k +1)}
\binom{i + j - k}{i} G_k \;.
\end{equ}

\endappendix

\bibliographystyle{Martin}

\bibliography{refs}

\end{document}